\providecommand{\U}[1]{\protect\rule{.1in}{.1in}}
\newtheorem{theorem}{Theorem}
\newtheorem{acknowledgement}[theorem]{Acknowledgement}
\newtheorem{corollary}[theorem]{Corollary}
\newtheorem{example}[theorem]{Example}
\newtheorem{lemma}[theorem]{Lemma}
\newenvironment{proof}[1][Proof]{\noindent\textbf{#1.} }{\ \rule{0.5em}{0.5em}}
\begin{document}

\title{Cycle products and efficient vectors in reciprocal matrices}
\author{Susana Furtado\thanks{Email: sbf@fep.up.pt The work of this author was
supported by FCT- Funda\c{c}\~{a}o para a Ci\^{e}ncia e Tecnologia, under
project UIDB/04561/2020.} \thanks{Corresponding author.}\\CMAFcIO and Faculdade de Economia \\Universidade do Porto\\Rua Dr. Roberto Frias\\4200-464 Porto, Portugal
\and Charles R. Johnson \thanks{Email: crjohn@wm.edu. }\\Department of Mathematics\\College of William and Mary\\Williamsburg, VA 23187-8795}
\maketitle

\begin{abstract}
We focus upon the relationship between Hamiltonian cycle products and
efficient vectors for a reciprocal matrix $A$, to more deeply understand the
latter. This facilitates a new description of the set of efficient vectors (as
a union of convex subsets), greater understanding of convexity within this set
and of order reversals in efficient vectors. A straightforward description of
all efficient vectors for an $n$-by-$n$, column perturbed consistent matrix is
given; it is the union of at most $(n-1)$ choose $2$ convex sets.

\end{abstract}

\textbf{Keywords}: cycle products, decision analysis, efficient vector,
Hamiltonian cycle, pair-wise comparisons, reciprocal matrix

\textbf{MSC2020}: 15B48, 05C20, 90B50, 91B06

\bigskip

\section{Introduction}

An $n$-by-$n$, entry-wise positive matrix $A=[a_{ij}]$ is called
\emph{reciprocal} if $a_{ji}=\frac{1}{a_{ij}}$ for $1\leq i,j\leq n.$ Let
$\mathcal{PC}_{n}$ denote the set of all $n$-by-$n$ reciprocal matrices. Such
matrices represent independent, pair-wise, ratio comparisons among $n$
alternatives. In a variety of models employing reciprocal matrices $A$, a
cardinal ranking vector $w=\left[
\begin{array}
[c]{cccc}%
w_{1} & w_{2} & \cdots & w_{n}%
\end{array}
\right]  ^{T}\in\mathbb{R}_{+}^{n}$ (the entry-wise positive vectors in
$\mathbb{R}^{n}$) is to be deduced from $A,$ so that ratios from $w$
approximate the ratio comparisons in $A$
\cite{anh,choo,dij,fichtner86,golany,is,Kula,zeleny}$.$

Vector $w$ is said to be \emph{efficient} for $A\in\mathcal{PC}_{n}$ if, for
$v\in\mathbb{R}_{+}^{n}$, $\left\vert A-vv^{(-T)}\right\vert \leq\left\vert
A-ww^{(-T)}\right\vert $ implies that $v$ is proportional to $w.$ Here,
$w^{(-T)}$ and $v^{(-T)}$ represent the entry-wise inverse of the transpose of
vectors $w$ and $v$, respectively, $\leq$ an entry-wise inequality and
$\left\vert \cdot\right\vert $ the entry-wise absolute value. Denote the set
of efficient vectors for reciprocal $A$ by $\mathcal{E}(A).$ This is a
connected \cite{blanq2006} but not necessarily convex set. A\ cardinal ranking
vector should be chosen from $\mathcal{E}(A),$ else there is a better
approximating vector. It is now known that the entry-wise geometric convex
hull of the columns of $A$ is contained in $\mathcal{E}(A)$ \cite{FJ2}. In
particular, every column of $A$ is efficient \cite{FJ1}, as well as the simple
entry-wise geometric mean of all columns \cite{blanq2006}. However, for
$n\geq4,$ the right Perron vector \cite{HJ} of $A,\ $the original proposal for
the ranking vector \cite{saaty1977,Saaty}, may or may not be efficient
(depending upon $A$). Classes of reciprocal matrices for which it is
\cite{p6,p2,FerFur}, and for which it is not \cite{bozoki2014,FJ5}, have been
identified. In \cite{FJ5} recent developments about the efficiency of the
Perron vector were provided.

There is a graph theoretic method to decide whether $w\in\mathcal{E}(A).$ The
graph $G(A,w)$ with vertex set $\{1,\ldots,n\}$ has an edge $i\rightarrow j,$
$1\leq i\neq j\leq n,$ if and only if $w_{i}\geq a_{ij}w_{j}$. First in
\cite{blanq2006}, and then in \textrm{\cite{FJ2}} in a simple matricial way,
it was shown that $w\in\mathcal{E}(A)$ if and only if $G(A,w)$ is strongly
connected. Since $A\ $is reciprocal, for every pair of distinct indices
$i,j\in\{1,2,\ldots,n\},$ at least one of the edges $i\rightarrow j$ or
$j\rightarrow i$ is in $G(A,w).$ A directed graph with at least one of
$\{i\rightarrow j,$ $j\rightarrow i\}$ as an edge (and, perhaps both) is
called \emph{semi-complete} \cite{bang} (if it never happens that both occur,
it is called a tournament \cite{bang,Moonbook}); so $G(A,w)$ is always
semi-complete. A semi-complete digraph is strongly connected if and only if it
contains a full cycle \cite{bang}, i.e. a Hamiltonian cycle. We add this fact
here to the graph theoretic characterization of efficiency.

\begin{theorem}
\label{blanq}Suppose that $A\in\mathcal{PC}_{n}$ and that $w\in\mathbb{R}%
_{+}^{n}$. The following are equivalent:

\begin{enumerate}
\item[(i)] $w\in\mathcal{E}(A);$

\item[(ii)] $G(A,w)$ is a strongly connected digraph;

\item[(iii)] $G(A,w)$ contains a Hamiltonian cycle.
\end{enumerate}
\end{theorem}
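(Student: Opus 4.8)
The plan is to reduce the statement entirely to facts already recalled in the introduction. The equivalence of (i) and (ii) is precisely the graph‑theoretic characterization of efficiency proved in \cite{blanq2006} and, in matricial form, in \cite{FJ2}, so I would simply invoke it. What remains is to establish (ii) $\Leftrightarrow$ (iii) for the digraph $G=G(A,w)$, and for this the key is that $G$ belongs to the class of \emph{semi-complete} digraphs.

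First I would record, in one line, why $G$ is semi-complete. For distinct $i,j$, reciprocity gives $a_{ji}=1/a_{ij}$; hence the failure of the defining inequality $w_{i}\geq a_{ij}w_{j}$ for the edge $i\rightarrow j$ means $w_{i}<a_{ij}w_{j}$, which rearranges to $w_{j}>a_{ji}w_{i}$, i.e.\ the edge $j\rightarrow i$ is present. Thus for every pair of distinct indices at least one of $i\rightarrow j$, $j\rightarrow i$ is an edge of $G$, which is the definition of semi-complete (this is already observed in the text immediately preceding the theorem). Now (iii) $\Rightarrow$ (ii) is trivial and valid for any digraph: a Hamiltonian cycle passes through all $n$ vertices, so one can travel from any vertex to any other along it, whence $G$ is strongly connected. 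For (ii) $\Rightarrow$ (iii) I would apply the digraph fact quoted from \cite{bang}: a semi-complete digraph is strongly connected if and only if it contains a full (Hamiltonian) cycle. Since $G$ is semi-complete, its strong connectivity produces the required Hamiltonian cycle. Chaining the implications gives (i) $\Leftrightarrow$ (ii) $\Leftrightarrow$ (iii).

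There is essentially no technical obstacle here; the only point requiring attention is the observation that $G(A,w)$ is always semi-complete, since it is this structural feature (not merely being a general digraph) that lets the theorem of \cite{bang} apply, and it is forced by reciprocity. If a self-contained argument were desired in place of the citation, the $(ii)\Rightarrow(iii)$ direction can be proved by a Camion-type longest-cycle argument: take a longest cycle $C$ in $G$; if some vertex off $C$ had both an in-neighbour and an out-neighbour on $C$ one could splice it in to lengthen $C$, and otherwise the vertices off $C$ split into those dominated by all of $C$ and those dominating all of $C$, at which point strong connectivity forces a still longer cycle — contradicting maximality unless $C$ is Hamiltonian. I would, however, simply cite \cite{bang}, since the fact is standard.
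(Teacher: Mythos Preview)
Your proposal is correct and matches the paper's approach exactly: the paper does not supply a formal proof but simply combines the equivalence (i)$\Leftrightarrow$(ii) from \cite{blanq2006,FJ2} with the observation that $G(A,w)$ is semi-complete and the cited fact from \cite{bang} that strong connectivity of a semi-complete digraph is equivalent to the existence of a Hamiltonian cycle. Your write-up is, if anything, slightly more explicit than the paper's own discussion.
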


If there is a $w\in\mathbb{R}_{+}^{n}$ such that $A=ww^{(-T)},$ then $A$ is
called \emph{consistent} (otherwise, it is said to be \emph{inconsistent}). Of
course, a consistent matrix is reciprocal and $\mathcal{E}(A)$ just consists
of positive multiples of $w.$ Any matrix in $\mathcal{PC}_{2}$ is consistent.
Throughout, we focus on the study of $\mathcal{E}(A)$ when $A\in
\mathcal{PC}_{n}$ is inconsistent (and, thus, $n\geq3$)$.$

In \textrm{\cite{FJ3}}, a way to generate all vectors in $\mathcal{E}(A)$
(inductively) was given. Here we follow a different approach to characterize
$\mathcal{E}(A).$ We connect the Hamiltonian cycles in $G(A,w)$ with the
Hamiltonian cycle products $\leq1$ from matrix $A.$ By a (Hamiltonian) cycle
in $A$ we mean a sequence of $n$ entries $a_{ij}$ for which $i\rightarrow j$
are the edges of a Hamiltonian cycle; the product of these entries is the
(Hamiltonian) cycle product. There are at most $\frac{(n-1)!}{2}$ of these
products $<1$, and exactly this number if no cycle product is $1$. We show
that $\mathcal{E}(A)$ is the union of at most $\frac{(n-1)!}{2}$ convex
subsets. Each subset is associated with a Hamiltonian cycle product from $A<1$
(when $A$ is inconsistent). We give sufficient conditions for the convexity of
$\mathcal{E}(A),$ and for the cone generated by the columns of $A$ (that is,
the set of nonzero, linear combinations of the columns of $A$ with nonnegative
coefficients) to lie in $\mathcal{E}(A).$ The former implies the latter since
any column of $A$ lies in $\mathcal{E}(A)$ \textrm{\cite{FJ1}}$.$ If the
latter occurs, the efficiency of the Perron vector of $A$ and of the left
singular vector of $A$ (that is, the Perron vector of $AA^{T}$) follows, as
these vectors belong to the cone generated by the columns \textrm{\cite{FJ4}}.

Using our new description of the efficient vectors for a reciprocal matrix, we
give necessary and sufficient conditions for the existence of an efficient
vector associated with a Hamiltonian cycle to have no order reversals with
entries along the cycle and show that, in any circumstance, there is always
one efficient vector with at most one such order reversal. An efficient vector
$w$ for $A\in\mathcal{PC}_{n}$ is said to exhibit an \emph{order reversal} at
$i,j$ if $w_{i}>w_{j}$ ($w_{i}<w_{j}$) when $a_{ij}<1$ ($a_{ij}>1$)$,$
$w_{i}=w_{j}$ when $a_{ij}\neq1$, or $w_{i}\neq w_{j}$ when $a_{ij}=1.$ There
is now a considerable literature on order reversal and vectors that minimize
the number of order reversals (see, for example, \cite{Csato,FARA,wang}).

If one off-diagonal entry of a consistent matrix, and its symmetrically placed
entry, are changed, so that the resulting $A$ lies in $\mathcal{PC}_{n},$ then
$A$ is called a \emph{simple perturbed consistent matrix}. In prior work
\textrm{\cite{CFF}}, $\mathcal{E}(A)$ for any simple perturbed consistent
matrix $A\in\mathcal{PC}_{n}$ was determined, and this set is defined by a
finite system of linear inequalities on the entries of the vectors, implying
that $\mathcal{E}(A)$ is convex for simple perturbed consistent matrices $A$.
Since this class of matrices includes $\mathcal{PC}_{3},$ it follows that
$\mathcal{E}(A)$ is convex for any $A\in\mathcal{PC}_{3}.$ Recently, in
\textrm{\cite{SZ2}}, the authors illustrated geometrically how $\mathcal{E}%
(A)$ is the union of $3$ convex sets in the $4$-by-$4$ case. These facts
follow from the main result in this paper (Theorem \ref{tt2}).

In \textrm{\cite{FJ4}} we have given examples of matrices obtained from
consistent matrices by changing one column (and the corresponding row
reciprocally), called \emph{column perturbed consistent matrices}, for which
the set of efficient vectors is not convex. However, it was shown that the
cone generated by the columns of such matrices $A$ is contained in
$\mathcal{E}(A).$ Here we describe the set of efficient vectors for a column
perturbed consistent matrix $A\in\mathcal{PC}_{n}$ and show that it is the
union of at most $(n-1)(n-2)/2$ convex sets. The simple perturbed consistent
matrices and some type of double perturbed consistent matrices are special
cases of column perturbed consistent matrices. As mentioned above, the set of
efficient vectors for the former was studied in \cite{CFF} and is convex. The
one for the latter was obtained in \textrm{\cite{Fu22}}. We give here an
example illustrating that it may be not convex. We also present an example of
a matrix $A$ that is neither a simple nor a double perturbed consistent matrix
and for which $\mathcal{E}(A)$ is convex. In the more recent paper
\cite{block}, we have described the efficient vectors for reciprocal matrices
obtained from a consistent matrix by modifying a $3$-by-$3$ principal
submatrix and have provided a class of efficient vectors when the modified
block is of size greater than $3$.

\bigskip

A useful observation is that the set $\mathcal{PC}_{n}$ is closed under both
positive diagonal similarity and permutation similarity (that is, monomial
similarity). Fortunately, such transformations interface with efficient
vectors in a natural way.

\begin{lemma}
\label{lsim}\cite{FJ1} Suppose that $A\in\mathcal{PC}_{n}$ and $w\in
\mathcal{E}(A).$ Then, if $D$ is an $n$-by-$n$ positive diagonal matrix ($P$
is an $n$-by-$n$ permutation matrix), then $Dw\in\mathcal{E}(DAD^{-1})$
($Pw\in\mathcal{E}(PAP^{T})$).
\end{lemma}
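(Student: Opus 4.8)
The plan is to reduce efficiency for the monomially conjugated matrix directly to efficiency for $A$ by an algebraic change of variables, exploiting the fact that positive diagonal (resp.\ permutation) conjugation commutes both with the rank-one ``consistent'' map $v\mapsto vv^{(-T)}$ and with the entrywise absolute value. First I would record the elementary identities $(Dw)(Dw)^{(-T)}=D\,(ww^{(-T)})\,D^{-1}$ and $(Pw)(Pw)^{(-T)}=P\,(ww^{(-T)})\,P^{T}$, each of which follows from a one-line computation of the $(i,j)$ entry (for $D$: the $(i,j)$ entry of either side is $\tfrac{d_i w_i}{d_j w_j}$). Consequently $DAD^{-1}-(Dw)(Dw)^{(-T)}=D\,(A-ww^{(-T)})\,D^{-1}$, and since $D$ is a positive diagonal matrix, passing to entrywise absolute values merely multiplies the $(i,j)$ entry of $A-ww^{(-T)}$ by the \emph{positive} scalar $d_i/d_j$; the very same scaling appears if $w$ is replaced by any $u\in\mathbb{R}_+^n$.

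Next, suppose $v\in\mathbb{R}_+^n$ satisfies $\left|DAD^{-1}-vv^{(-T)}\right|\le\left|DAD^{-1}-(Dw)(Dw)^{(-T)}\right|$. Set $u:=D^{-1}v\in\mathbb{R}_+^n$, so that $v=Du$ and $vv^{(-T)}=D\,(uu^{(-T)})\,D^{-1}$. Then the assumed entrywise inequality reads, in the $(i,j)$ position, $\tfrac{d_i}{d_j}\left|a_{ij}-\tfrac{u_i}{u_j}\right|\le\tfrac{d_i}{d_j}\left|a_{ij}-\tfrac{w_i}{w_j}\right|$, and cancelling the common positive factor $d_i/d_j$ gives $\left|A-uu^{(-T)}\right|\le\left|A-ww^{(-T)}\right|$. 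Since $w\in\mathcal{E}(A)$, this forces $u$ to be proportional to $w$, hence $v=Du$ is proportional to $Dw$; that is, $Dw\in\mathcal{E}(DAD^{-1})$.

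The permutation case is handled identically with $D$ replaced by $P$: conjugation by $P$ only permutes entries, so it commutes with $\left|\cdot\right|$ and preserves entrywise inequalities; writing $u:=P^{T}v$ and using $PAP^{T}-(Pw)(Pw)^{(-T)}=P\,(A-ww^{(-T)})\,P^{T}$ reduces the hypothesis to $\left|A-uu^{(-T)}\right|\le\left|A-ww^{(-T)}\right|$, whence $u\propto w$ and $v=Pu\propto Pw$. There is no genuine obstacle here; the only care needed is bookkeeping with the entrywise-inverse-transpose notation, i.e.\ verifying the two conjugation identities entrywise and noting that positivity of $D$ is exactly what legitimizes cancelling $d_i/d_j$ inside the absolute values. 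As an even quicker alternative one could invoke the equivalence (i)$\Leftrightarrow$(ii) of Theorem~\ref{blanq}: the edge test ``$w_i\ge a_{ij}w_j$'' is unchanged when $(A,w)$ is replaced by $(DAD^{-1},Dw)$ (the factors $d_i,d_j$ cancel) and is merely relabelled when it is replaced by $(PAP^{T},Pw)$, so $G(DAD^{-1},Dw)=G(A,w)$ and $G(PAP^{T},Pw)$ is $G(A,w)$ with its vertices permuted; strong connectivity is invariant under both operations, giving the claim.
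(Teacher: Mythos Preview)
Your proof is correct. The paper does not supply its own proof of this lemma---it is quoted from \cite{FJ1}---so there is no in-paper argument to compare against directly. That said, the sentence the paper places immediately after the lemma (``Note that the graphs $G(A,w)$ and $G(DAD^{-1},Dw)$ coincide'') is exactly your ``quicker alternative'' via Theorem~\ref{blanq}, so your second route is the one the authors evidently have in mind; your first route, arguing straight from the definition of efficiency by the change of variables $u=D^{-1}v$ (resp.\ $u=P^{T}v$), is an independent and slightly more self-contained argument that avoids appealing to the graph characterization altogether.
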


\bigskip Note that the graphs $G(A,w)$ and $G(DAD^{-1},Dw)$ coincide. From the
lemma we have that, if $S$ is an $n$-by-$n$ monomial matrix, then
$\mathcal{E}(SAS^{-1})=S\mathcal{E}(A).$

\bigskip

In the next section, we present a fundamental cycle theorem for efficient
vectors of reciprocal matrices. In Section \ref{sorderrev}, we apply that
theorem to give some results on order reversals in an efficient vector. Then,
in Section \ref{s5} we apply the fundamental cycle theorem to study the
efficient vectors for a column perturbed consistent matrix. Some conclusions
are presented in Section \ref{sconclusions}.

\section{The fundamental cycle theorem for efficient vectors of reciprocal
matrices\label{s3}}

Here, we present a fundamental theorem, based upon cycles, that explains
$\mathcal{E}(A)$ for $A\in\mathcal{PC}_{n}.$ This permits insight into the
nature of $\mathcal{E}(A),$ including a sufficient condition for convexity.

\subsection{Auxiliary results}

Next, we note that, for a fixed $A$, the set of all $w$ such that $G(A,w)$
share a common edge is convex.

\begin{lemma}
\label{ll1}Suppose that $A\in\mathcal{PC}_{n}\ $and that $u,v\in\mathbb{R}%
_{+}^{n}$. If $G(A,u)$ and $G(A,v)$ share a common edge, that edge must also
occur in $G(A,w)$ for any $w=tu+(1-t)v,$ $t\in(0,1).$
\end{lemma}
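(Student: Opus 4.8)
The plan is to unwind the definition of the edge set of $G(A,w)$ and observe that the condition for an edge to be present is essentially a linear inequality after taking logarithms. An edge $i\to j$ lies in $G(A,w)$ precisely when $w_i \geq a_{ij} w_j$, equivalently $w_i - a_{ij} w_j \geq 0$. Suppose the edge $i\to j$ occurs in both $G(A,u)$ and $G(A,v)$, so $u_i - a_{ij} u_j \geq 0$ and $v_i - a_{ij} v_j \geq 0$. For $w = tu + (1-t)v$ with $t\in(0,1)$, we compute $w_i - a_{ij} w_j = t(u_i - a_{ij} u_j) + (1-t)(v_i - a_{ij} v_j)$, which is a nonnegative combination of two nonnegative quantities, hence nonnegative. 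Therefore $w_i \geq a_{ij} w_j$, i.e. the edge $i\to j$ is in $G(A,w)$.

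The key observation making this work is that the defining inequality $w_i \geq a_{ij} w_j$ is \emph{linear} (indeed homogeneous linear) in the entries of $w$ for fixed $A$, so the set of $w\in\mathbb{R}_+^n$ satisfying it is the intersection of a half-space with the positive orthant, which is convex; and a single edge being common to $G(A,u)$ and $G(A,v)$ just says both $u$ and $v$ lie in that convex set, so the whole segment does too. I would present the argument in exactly the one-line form above: write out $w_i - a_{ij} w_j$, distribute, and note the sign.

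There is essentially no obstacle here; the only things to be careful about are (a) confirming that $w = tu+(1-t)v$ with $u,v \in \mathbb{R}_+^n$ and $t\in(0,1)$ is again in $\mathbb{R}_+^n$, so that $G(A,w)$ is defined (immediate, since a strict convex combination of positive vectors is positive), and (b) being explicit that the inequality defining an edge is ``$\geq$'' rather than strict, so the closed half-space is what appears and there is no issue at the boundary. One could alternatively phrase everything multiplicatively (``$w_i/w_j \geq a_{ij}$''), but the additive form $w_i - a_{ij}w_j \geq 0$ is cleanest for the convexity computation, so that is the form I would use.
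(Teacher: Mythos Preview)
Your proof is correct and follows essentially the same approach as the paper: both arguments verify directly that the linear inequality $w_i \geq a_{ij} w_j$ is preserved under convex combinations by writing $w_i - a_{ij} w_j = t(u_i - a_{ij} u_j) + (1-t)(v_i - a_{ij} v_j) \geq 0$.
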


\begin{proof}
Suppose that $G(A,u)$ and $G(A,v)$ have a common edge $i\rightarrow j.$ Then
$u_{i}\geq a_{ij}u_{j}$ and $v_{i}\geq a_{ij}v_{j},$ so that
\begin{align*}
w_{i}  &  =tu_{i}+(1-t)v_{i}\geq ta_{ij}u_{j}+(1-t)a_{ij}v_{j}\\
&  =a_{ij}(tu_{j}+(1-t)v_{j})=a_{ij}w_{j}.
\end{align*}

\end{proof}

As an immediate consequence of Lemma \ref{ll1} and Theorem \ref{blanq}, we
have the following.

\begin{lemma}
\label{ll2}Suppose that $A\in\mathcal{PC}_{n}$ and that $u,v\in\mathbb{R}%
_{+}^{n}$. If $G(A,u)$ and $G(A,v)$ share a common Hamiltonian cycle, then the
line segment joining $u$ and $v$ lies in $\mathcal{E}(A).$
\end{lemma}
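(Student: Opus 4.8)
The plan is to reduce this directly to Lemma \ref{ll1} together with the cycle characterization in Theorem \ref{blanq}. Read the hypothesis as saying that there is one and the same set of $n$ directed edges $i\rightarrow j$ -- one leaving each vertex, forming a cycle through all of $\{1,\ldots,n\}$ -- that is contained in $G(A,u)$ and also contained in $G(A,v)$. Thus each of these $n$ edges is an edge common to $G(A,u)$ and $G(A,v)$ in the sense of Lemma \ref{ll1}.

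First I would fix $t\in(0,1)$ and put $w=tu+(1-t)v\in\mathbb{R}_{+}^{n}$. Applying Lemma \ref{ll1} once to each of the $n$ edges of the common Hamiltonian cycle, every such edge $i\rightarrow j$ also occurs in $G(A,w)$; hence $G(A,w)$ contains that Hamiltonian cycle. By the equivalence (iii)$\Leftrightarrow$(i) of Theorem \ref{blanq}, $w\in\mathcal{E}(A)$. This covers the open segment $t\in(0,1)$. To include the endpoints, note that $u$ (respectively $v$) is itself a positive vector whose graph $G(A,u)$ (respectively $G(A,v)$) contains the Hamiltonian cycle by hypothesis, so Theorem \ref{blanq} gives $u,v\in\mathcal{E}(A)$ as well. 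Therefore the whole closed segment $\{tu+(1-t)v:t\in[0,1]\}$ lies in $\mathcal{E}(A)$.

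There is no substantive obstacle here: the statement is essentially a repackaging of the edge-wise convexity in Lemma \ref{ll1} with the Hamiltonian-cycle test for efficiency. The one point I would be careful to emphasize is that ``sharing a common Hamiltonian cycle'' must mean sharing the \emph{same} ordered collection of $n$ edges -- two semi-complete graphs can each be Hamiltonian without having a Hamiltonian cycle in common, in which case the conclusion genuinely fails, and indeed this is precisely the mechanism behind the non-convexity of $\mathcal{E}(A)$ in general.
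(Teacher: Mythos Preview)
Your proof is correct and follows exactly the route the paper indicates: the paper presents this lemma as an immediate consequence of Lemma~\ref{ll1} and Theorem~\ref{blanq}, and you spell out precisely that derivation, applying Lemma~\ref{ll1} edge-by-edge along the common cycle and then invoking (iii)$\Leftrightarrow$(i) of Theorem~\ref{blanq}. Your added remarks on the endpoints and on the meaning of ``common Hamiltonian cycle'' are accurate and worth keeping.
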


The following lemma on Hamiltonian cycles in $A$ along which all entries are
$1$ will be helpful.

\begin{lemma}
\label{lsingleton0}Suppose that $A\in\mathcal{PC}_{n}$ is inconsistent and
$\mathcal{C}$ is a Hamiltonian cycle in $A.$ If all entries in $A$ along
$\mathcal{C}$ are $1,$ then there is a Hamiltonian cycle $\mathcal{C}^{\prime
}$ in $A$ such that all entries in $A$ along $\mathcal{C}^{\prime}$ are
$\leq1$ and there is at least one entry $<1.$
\end{lemma}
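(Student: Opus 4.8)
The plan is to recast the statement in terms of the digraph $G(A,e)$, where $e$ denotes the all-ones vector, and then to perturb $e$ slightly inside $\mathcal{E}(A)$.

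First note that the hypothesis on $\mathcal{C}$ says precisely that the Hamiltonian cycle underlying $\mathcal{C}$ is a Hamiltonian cycle of $G(A,e)$, since $a_{ij}=1\leq 1$ along its edges; hence $e\in\mathcal{E}(A)$ by Theorem~\ref{blanq}. Conversely, a Hamiltonian cycle $\mathcal{C}'$ in $A$ with all entries $\leq 1$ and at least one entry $<1$ is exactly a Hamiltonian cycle of $G(A,e)$ that uses some edge $i\to j$ with $a_{ij}<1$ (equivalently, one not contained in the symmetric subgraph of ``equality edges'' where $a_{ij}=1$). So the goal reduces to producing such a Hamiltonian cycle in $G(A,e)$.

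Since $A$ is inconsistent, the columns of $A$ are not all proportional (otherwise $A$ would have the form $xx^{(-T)}$ and be consistent), so $\mathcal{E}(A)$, which contains every column of $A$ \cite{FJ1}, is not a single ray. Because $\mathcal{E}(A)$ is connected \cite{blanq2006} and scale-invariant (as $G(A,tw)=G(A,w)$ for $t>0$), the set of directions of its vectors is connected with more than one point, hence has no isolated point; therefore there are efficient vectors $w$ arbitrarily close to $e$ that are not proportional to $e$. For $w$ close enough to $e$ we also have $G(A,w)\subseteq G(A,e)$: if $a_{ij}<1$ then $w_i\geq a_{ij}w_j$ holds near $e$ and $a_{ij}\leq 1$; if $a_{ij}>1$ then $w_i<a_{ij}w_j$ near $e$, so $i\to j\notin G(A,w)$; and if $a_{ij}=1$ then any orientation of $\{i,j\}$ occurring in $G(A,w)$ already occurs in $G(A,e)$. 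Fix an efficient $w$ that is both close enough to $e$ for this containment to hold and not proportional to $e$. By efficiency and Theorem~\ref{blanq}, $G(A,w)$ contains a Hamiltonian cycle $\mathcal{C}'$, and $\mathcal{C}'\subseteq G(A,w)\subseteq G(A,e)$, so all entries of $A$ along $\mathcal{C}'$ are $\leq 1$. If every such entry were $1$, then $w_i\geq w_j$ for each edge $i\to j$ of $\mathcal{C}'$, and going around the cycle forces $w$ to be constant, i.e.\ proportional to $e$ -- a contradiction. Hence some entry of $A$ along $\mathcal{C}'$ is $<1$, and $\mathcal{C}'$ is as required.

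The delicate point is the middle step: one must simultaneously guarantee the existence of an efficient $w$ near $e$ not proportional to $e$ -- for which connectedness of $\mathcal{E}(A)$ together with inconsistency is exactly what is needed -- and the containment $G(A,w)\subseteq G(A,e)$, which is what upgrades ``cycle product $\leq 1$'' to ``every entry $\leq 1$''. A purely combinatorial rerouting of $\mathcal{C}$ (a $2$-opt move using a single off-cycle entry $a_{pq}<1$) is the natural first attempt, but it forces a second, uncontrolled off-cycle entry into the new cycle; the perturbation argument sidesteps this entirely.
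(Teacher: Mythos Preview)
Your argument is correct. The key steps --- $e\in\mathcal{E}(A)$ from the hypothesis, the existence of efficient $w$ arbitrarily close to $e$ but not proportional to it (via connectedness of $\mathcal{E}(A)$ and the non-isolated-point argument in the projectivization), the containment $G(A,w)\subseteq G(A,e)$ for $w$ near $e$, and the final contradiction forcing some entry $<1$ along $\mathcal{C}'$ --- all go through as stated, and there is no circularity, since connectedness \cite{blanq2006}, column efficiency \cite{FJ1}, and Theorem~\ref{blanq} are all established independently of this lemma.

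Your route is genuinely different from the paper's. The paper gives a direct combinatorial construction: after a permutation similarity making $\mathcal{C}$ the standard cycle $1\to 2\to\cdots\to n\to 1$, it locates the smallest $k$ for which the $k$-th super- or sub-diagonal contains an entry $<1$, and then builds $\mathcal{C}'$ explicitly through a short case analysis (essentially a controlled $2$-opt or zig-zag rerouting that only touches entries already known to be $\leq 1$). Your approach trades this explicit construction for a perturbation argument leaning on connectedness of $\mathcal{E}(A)$. The paper's proof is more elementary and self-contained (it uses nothing about $\mathcal{E}(A)$ beyond permutation invariance) and actually exhibits $\mathcal{C}'$; yours is shorter and conceptually cleaner, and --- as you note in your final paragraph --- neatly bypasses the difficulty that a naive $2$-opt move introduces an uncontrolled second off-cycle entry. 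It is worth remarking that the paper itself invokes essentially the same connectedness idea later, as an alternate justification of \eqref{union} in Theorem~\ref{tt2}; you have in effect pushed that idea down to the level of this lemma.
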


\begin{proof}
Since a permutation similarity on $A$ keeps the sets of entries in $A$ along
the Hamiltonian cycles the same, we may assume, without loss of generality,
that $\mathcal{C}$ is the cycle $1\rightarrow2\rightarrow3\rightarrow
\cdots\rightarrow n-1\rightarrow n\rightarrow1.$ Then, $a_{j,j+1}%
=a_{j+1,j}=a_{1n}=a_{n1}=1,$ for $j=1,\ldots,n-1.$ Since $A$ is not
consistent, it has an entry $<1.$ Let $k$ $(>1)$ be the smallest integer such
that either the $k$-th upper-diagonal or the $k$-th lower-diagonal of $A$ has
an entry $<1.$ Then, in one of such diagonals, one of the following occurs: i)
there is an entry $<1$ followed by an entry $\leq1$; ii) after some possible
entries equal to $1,$ the entries alternate between $<1$ and $>1,$ with the
last entry $<1.$ Without loss of generality, we may assume that one of these
situations occurs for the $k$-th upper-diagonal (as otherwise we may consider
$A^{T}$ instead of $A$). Note that $a_{pq}=1$ for $|p-q|<k.$

Case i) Suppose that $a_{i,i+k}<1$ and $a_{i+1,i+1+k}\leq1$ for some
$i\in\{1,\ldots,n-k-1\}.$ Let
\begin{align*}
\mathcal{C}^{\prime}  &  :i\rightarrow i+k\rightarrow i+1\rightarrow
i+2\rightarrow\cdots\rightarrow i+k-1\rightarrow i+k+1\rightarrow\\
i+k+2  &  \rightarrow\cdots\rightarrow n\rightarrow1\rightarrow2\rightarrow
\cdots\rightarrow i-1\rightarrow i\text{.}%
\end{align*}
Note that, if $k=2,$ then $a_{i+k-1,i+k+1}=a_{i+1,i+1+k}\leq1.$

Case ii) We consider two subcases.

Case iia) Suppose that $k>2$ and $a_{n-k,n}<1.$ Let
\[
\mathcal{C}^{\prime}:n-k\rightarrow n\rightarrow1\rightarrow2\rightarrow
\cdots\rightarrow n-k-1\rightarrow n-k+1\rightarrow n-k+2\rightarrow
\cdots\rightarrow n-1\rightarrow n-k\text{. }%
\]

Case iib) Suppose that $k=2$, $a_{n-2,n}<1$, $a_{n-3,n-1}\geq1$,
$a_{n-4,n-2}\leq1$, $a_{n-5,n-3}\geq1,$ etc. Let%
\begin{align*}
\mathcal{C}^{\prime}  &  :1\rightarrow2\rightarrow4\rightarrow\cdots
\rightarrow n-2\rightarrow n\rightarrow n-1\rightarrow n-3\rightarrow
\cdots\rightarrow3\rightarrow1,\text{ if }n\text{ is even,}\\
\mathcal{C}^{\prime}  &  :1\rightarrow3\rightarrow5\rightarrow\cdots
\rightarrow n-2\rightarrow n\rightarrow n-1\rightarrow n-3\rightarrow
\cdots\rightarrow4\rightarrow2\rightarrow1,\text{ if }n\text{ is odd.}%
\end{align*}
In each case, the cycle $\mathcal{C}^{\prime}$ verifies the claim.
\end{proof}

\subsection{Description of the set $\mathcal{\varepsilon}_{A}(\mathcal{C})$}

In what follows, given $A\in\mathcal{PC}_{n}$ and a Hamiltonian cycle
$\mathcal{C}$, we denote by $\mathcal{\varepsilon}_{A}(\mathcal{C})$ the set
of vectors $w$ such that $G(A,w)$ contains the cycle $\mathcal{C}$. Note that,
by Theorem \ref{blanq}, $\varepsilon_{A}(\mathcal{C})\subseteq\mathcal{E}(A)$
and, by Lemma \ref{ll2}, $\varepsilon_{A}(\mathcal{C})$ is convex$.$ Next, we
describe the sets $\mathcal{\varepsilon}_{A}(\mathcal{C}).$ We denote by
$\pi_{1}(A)$ and $\pi_{<1}(A)$ the set of all Hamiltonian cycles for $A$ with
product $\leq1$ and $<1$, respectively. Of course $\pi_{<1}(A)\subseteq\pi
_{1}(A).$

Suppose that $\mathcal{C}$ is the Hamiltonian cycle
\begin{equation}
\gamma_{1}\rightarrow\gamma_{2}\rightarrow\gamma_{3}\rightarrow\cdots
\rightarrow\gamma_{n}\rightarrow\gamma_{1}. \label{cyclegama}%
\end{equation}
Then,
\[
a_{\gamma_{n}\gamma_{1}}%
{\displaystyle\prod\limits_{i=1}^{n-1}}
a_{\gamma_{i}\gamma_{i+1}}%
\]
is the cycle product for $\mathcal{C}$ in $A.$ We have that $w\in
\mathcal{\varepsilon}_{A}(\mathcal{C})$ if and only if
\begin{equation}%
\begin{array}
[c]{lll}%
w_{\gamma_{1}}\geq a_{\gamma_{1}\gamma_{2}}w_{\gamma_{2}},\quad & \quad
w_{\gamma_{2}}\geq a_{\gamma_{2}\gamma_{3}}w_{\gamma_{3}}, & \ldots,\\
&  & \\
\ldots,\quad & w_{\gamma_{n-1}}\geq a_{\gamma_{n-1}\gamma_{n}}w_{\gamma_{n}%
}, & \quad w_{\gamma_{n}}\geq a_{\gamma_{n}\gamma_{1}}w_{\gamma_{1}},
\end{array}
\label{f111}%
\end{equation}
or, equivalently,%
\begin{align}
w_{\gamma_{1}}  &  \geq a_{\gamma_{1}\gamma_{2}}w_{\gamma_{2}}\geq\left(
{\displaystyle\prod\limits_{i=1}^{2}}
a_{\gamma_{i}\gamma_{i+1}}\right)  w_{\gamma_{3}}\geq\nonumber\\
& \label{f222}\\
\cdots &  \geq\left(
{\displaystyle\prod\limits_{i=1}^{n-1}}
a_{\gamma_{i}\gamma_{i+1}}\right)  w_{\gamma_{n}}\geq\left(  a_{\gamma
_{n}\gamma_{1}}%
{\displaystyle\prod\limits_{i=1}^{n-1}}
a_{\gamma_{i}\gamma_{i+1}}\right)  w_{\gamma_{1}}.\nonumber
\end{align}
For a matricial description of $\varepsilon_{A}(\mathcal{C}),$ consider
\[
P=\left[
\begin{tabular}
[c]{c|c}%
$%
\begin{array}
[c]{c}%
0\\
\vdots\\
0
\end{array}
$ & $I_{n-1}$\\\hline
$1$ & $%
\begin{array}
[c]{ccc}%
0 & \cdots & 0
\end{array}
$%
\end{tabular}
\ \ \ \ \ \ \ \ \ \ \right]  \text{ and }S=\operatorname*{diag}(a_{\gamma
_{1}\gamma_{2}},\text{ }a_{\gamma_{2}\gamma_{3}},\text{ }\ldots,\text{
}a_{\gamma_{n-1}\gamma_{n}},\text{ }a_{\gamma_{n}\gamma_{1}}).
\]
Then (\ref{f111}) is equivalent to $SPw_{\gamma}\leq w_{\gamma},$ in which
$w_{\gamma}=\left[
\begin{array}
[c]{cccc}%
w_{\gamma_{1}} & w_{\gamma_{2}} & \cdots & w_{\gamma_{n}}%
\end{array}
\right]  ^{T}.$ Thus $w\in\mathcal{\varepsilon}_{A}(\mathcal{C})$ if and only
if $w_{\gamma}$ satisfies the system of linear inequalities
\[
(I_{n}-SP)w_{\gamma}\geq0\text{, }w_{\gamma}>0.
\]

We say that a set of efficient vectors for $A$ is a \emph{singleton} if it
only contains a single (positive) vector (and all its positive multiples).

\begin{lemma}
\label{lempty}Suppose that $A\in\mathcal{PC}_{n}$ and that $\mathcal{C},$ as
in (\ref{cyclegama}), lies in $\pi_{1}(A)$. The set of positive solutions $w$
to (\ref{f111}), when any $n-1$ inequalities are taken to be equalities, is a
singleton. Moreover, a vector in each of the $n$ singletons is extreme in
$\mathcal{\varepsilon}_{A}(\mathcal{C}),$ so that $\mathcal{\varepsilon}%
_{A}(\mathcal{C})$ is the cone generated by these $n$ vectors.
\end{lemma}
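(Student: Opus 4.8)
The plan is to exploit the matricial description $(I_n - SP)w_\gamma \geq 0$ developed just above the statement, and to analyze what happens when exactly $n-1$ of these inequalities become equalities. First I would observe that $SP$ is (up to the relabeling by $\gamma$) a nonnegative matrix whose only nonzero entries form a single $n$-cycle: $(SP)_{k,k+1} = a_{\gamma_k\gamma_{k+1}}$ for $k=1,\dots,n-1$ and $(SP)_{n,1} = a_{\gamma_n\gamma_1}$. Hence $SP$ is the weighted adjacency matrix of the directed cycle $\mathcal{C}$, its characteristic polynomial is $\lambda^n - p$ where $p = a_{\gamma_n\gamma_1}\prod_{i=1}^{n-1} a_{\gamma_i\gamma_{i+1}} \leq 1$ is exactly the cycle product, and its Perron eigenvalue is $p^{1/n} \leq 1$.

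The core of the argument is a ``closing the chain'' computation. Fix an index $m \in \{1,\dots,n\}$ and suppose all inequalities in (\ref{f111}) except the $m$-th are equalities. Then, reading (\ref{f222}) starting from $w_{\gamma_m}$ and going once around, each equality lets us solve for the next coordinate in terms of $w_{\gamma_m}$: we get $w_{\gamma_{m+1}} = w_{\gamma_m}/a_{\gamma_m\gamma_{m+1}}$, and so on, so every coordinate is a positive scalar multiple of $w_{\gamma_m}$, determined by partial products of the $a$'s along $\mathcal{C}$. This already forces the solution to be unique up to positive scaling, i.e. a singleton. It remains to check consistency: the one remaining (strict or non-strict) inequality, the $m$-th, must actually hold. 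Going all the way around the chain of equalities returns $w_{\gamma_m}$ multiplied by $p/a_{\gamma_m\gamma_{m+1}}\cdot a_{\gamma_m\gamma_{m+1}}$ — more precisely, the composition of the $n-1$ equalities expresses $a_{\gamma_m\gamma_{m+1}}^{-1}\cdots$ and then the $m$-th inequality $w_{\gamma_m} \geq a_{\gamma_m\gamma_{m+1}} w_{\gamma_{m+1}}$ becomes $w_{\gamma_m} \geq p\, w_{\gamma_m}$, which holds precisely because $p \leq 1$. This is where the hypothesis $\mathcal{C} \in \pi_1(A)$ is used, and it is the one genuine obstacle: one must verify the bookkeeping of which product appears and confirm it is the full cycle product $p$, not some partial product. (When $p<1$ the $m$-th inequality is strict; when $p=1$ all $n$ inequalities are equalities and the $n$ singletons coincide — a degenerate but harmless case worth a remark.)

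For the second assertion, I would argue that each such singleton vector $w^{(m)}$ is extreme in the cone $\varepsilon_A(\mathcal{C})$. Since $\varepsilon_A(\mathcal{C})$ is the solution set of the homogeneous system $(I_n - SP)w_\gamma \geq 0$, $w_\gamma > 0$, it is a (pointed, as all entries are positive) polyhedral cone, and its extreme rays are exactly those feasible vectors at which $n-1$ linearly independent constraints among the rows of $I_n - SP$ are active. The rows of $I_n - SP$ corresponding to any $n-1$ of the $n$ cycle-inequalities are linearly independent: deleting one row from $I_n - SP$ leaves a matrix in which the remaining rows, after the cyclic reordering, are in echelon form with nonzero pivots (the diagonal $1$'s), hence full row rank $n-1$. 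So $w^{(m)}$ is an extreme ray, and distinct $m$ give distinct rays (their active constraint sets differ, when $p<1$). Finally, by standard polyhedral-cone theory (Minkowski–Weyl / the fact that a pointed polyhedral cone is the conical hull of its extreme rays), and since there are exactly $n$ extreme rays, $\varepsilon_A(\mathcal{C})$ is the cone generated by $w^{(1)},\dots,w^{(n)}$. I would present the chain-closing computation explicitly (it is short) and treat the polyhedral-cone facts as standard, citing that $\varepsilon_A(\mathcal{C})$ is convex by Lemma \ref{ll2} and nonempty since it contains, e.g., the relevant columns of $A$.
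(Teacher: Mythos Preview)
Your argument is correct and follows essentially the same line as the paper's: fix an index, set the other $n-1$ inequalities in (\ref{f111}) to equalities, observe that this determines $w$ up to a positive scalar, and verify that the remaining inequality reduces to $1\geq p$, where $p$ is the cycle product. One small slip in your write-up: when you begin the chain with ``$w_{\gamma_{m+1}} = w_{\gamma_m}/a_{\gamma_m\gamma_{m+1}}$'' you are invoking the $m$-th relation, which is precisely the one \emph{not} taken as an equality; the chain should start at the $(m{+}1)$-th equality and run the long way around to the $(m{-}1)$-th. Your subsequent ``closing the chain'' conclusion $w_{\gamma_m}\geq p\,w_{\gamma_m}$ is nonetheless the right one, and it matches the paper's computation (which multiplies the $n-1$ equalities rather than substituting along the chain).

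For the second claim the paper simply asserts, without further detail, that the $n$ vectors so obtained are the extreme rays of $\varepsilon_A(\mathcal{C})$. You supply the missing polyhedral justification: any $n-1$ rows of $I_n-SP$ are linearly independent (after deleting one row the remaining submatrix is, up to a cyclic relabeling, upper bidiagonal with unit diagonal), so each singleton sits on a genuine extreme ray, and Minkowski--Weyl for pointed cones finishes. This is an elaboration of the paper's approach rather than a different one, and your remark on the degenerate case $p=1$ (all $n$ rays coincide) is consistent with Lemma~\ref{lltt}.
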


\begin{proof}
Let $k\in\{1,\ldots,n\}.$ Suppose that all inequalities in (\ref{f111}) are
taken to be equalities except the $k$-th one. First suppose that $k\neq n.$
Multiplying the equalities, we get
\begin{align*}%
{\displaystyle\prod\limits_{i=1,i\neq k}^{n}}
w_{\gamma_{i}}  &  =a_{\gamma_{n}\gamma_{1}}w_{\gamma_{1}}%
{\displaystyle\prod\limits_{i=1,i\neq k}^{n-1}}
\left(  a_{\gamma_{i}\gamma_{i+1}}w_{\gamma_{i+1}}\right) \\
&  \Leftrightarrow w_{\gamma_{k+1}}=\left(  a_{\gamma_{n}\gamma_{1}}%
{\displaystyle\prod\limits_{i=1}^{n-1}}
a_{\gamma_{i}\gamma_{i+1}}\right)  a_{\gamma_{k+1}\gamma_{k}}w_{\gamma_{k}}.
\end{align*}
Since
\[
a_{\gamma_{n}\gamma_{1}}%
{\displaystyle\prod\limits_{i=1}^{n-1}}
a_{\gamma_{i}\gamma_{i+1}}\leq1,
\]
we get
\[
w_{\gamma_{k+1}}\leq a_{\gamma_{k+1}\gamma_{k}}w_{\gamma_{k}}\Leftrightarrow
w_{\gamma_{k}}\geq a_{\gamma_{k}\gamma_{k+1}}w_{\gamma_{k+1}},
\]
implying that the $k$-th inequality is satisfied. If $k=n,$ it can be seen in
a similar way that the equalities taken from the first $n-1$ inequalities in
(\ref{f111}) imply the $n$-th inequality. Clearly, taking one entry $1,$ each
$n-1$ equalities in (\ref{f111}) determines a unique positive solution for
$w.$ So, we get $n$ (some possibly equal) vectors in $\mathcal{\varepsilon
}_{A}(\mathcal{C})$ that are precisely the extreme points of the set.
\end{proof}

\bigskip

Next we show an important fact about the structure of $\mathcal{\varepsilon
}_{A}(\mathcal{C})$.

\begin{lemma}
\label{lltt}Suppose that $A\in\mathcal{PC}_{n}$ and that $\mathcal{C}\in
\pi_{1}(A)$. Then $\mathcal{\varepsilon}_{A}(\mathcal{C})$ is a singleton if
and only if the cycle product for $\mathcal{C}$ in $A$ is $1.$ Moreover, if
$A$ is inconsistent and the product for $\mathcal{C}$ in $A$ is $1,$ then
$\varepsilon_{A}(\mathcal{C})\subseteq\varepsilon_{A}(\mathcal{C}^{\prime}),$
for some $\mathcal{C}^{\prime}\in\pi_{<1}(A).$
\end{lemma}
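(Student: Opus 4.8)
The plan is to prove the two parts of Lemma~\ref{lltt} in sequence, using the explicit description of the extreme points of $\varepsilon_A(\mathcal{C})$ from Lemma~\ref{lempty}. First, for the characterization of when $\varepsilon_A(\mathcal{C})$ is a singleton, I would argue as follows. Write $p$ for the cycle product $a_{\gamma_n\gamma_1}\prod_{i=1}^{n-1}a_{\gamma_i\gamma_{i+1}}\le 1$. If $p=1$, then the chain of inequalities in (\ref{f222}) begins and ends with $w_{\gamma_1}$ and the product of the multipliers around the cycle is exactly $1$; hence every inequality in (\ref{f222}) must be an equality, which forces $w_\gamma$ (and hence $w$) to be a unique positive vector up to scaling, i.e.\ $\varepsilon_A(\mathcal{C})$ is a singleton. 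Conversely, if $p<1$, I would exhibit two non-proportional vectors in $\varepsilon_A(\mathcal{C})$: by Lemma~\ref{lempty}, taking the $k$-th inequality strict (for two different choices of $k$) gives two extreme points, and one checks from the formula $w_{\gamma_{k+1}}=(p\, a_{\gamma_{k+1}\gamma_k})w_{\gamma_k}$ derived in the proof of Lemma~\ref{lempty} that these extreme points are genuinely distinct when $p<1$ (the strict inequality at position $k$ is then strict, not an equality), so the cone they generate is more than a ray.

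For the ``moreover'' part, suppose $A$ is inconsistent and the product for $\mathcal{C}$ in $A$ equals $1$. Then by part one, $\varepsilon_A(\mathcal{C})=\{w\}$ (up to scaling) where $w$ is the unique vector making all inequalities in (\ref{f111}) equalities; thus along $\mathcal{C}$ we have $w_{\gamma_i}=a_{\gamma_i\gamma_{i+1}}w_{\gamma_{i+1}}$ for all $i$, so every edge of $\mathcal{C}$ lies in $G(A,w)$ with equality. Consider the matrix $A'=D^{-1}A D$ where $D=\operatorname*{diag}(w)$; by Lemma~\ref{lsim}, $\mathbf{1}\in\mathcal{E}(A')$ and $G(A',\mathbf{1})=G(A,w)$ contains $\mathcal{C}$ with all entries of $A'$ along $\mathcal{C}$ equal to $1$. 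Since $A$ is inconsistent, so is $A'$, and Lemma~\ref{lsingleton0} applies: there is a Hamiltonian cycle $\mathcal{C}'$ in $A'$ along which all entries are $\le 1$ with at least one $<1$; translating back via $D$, $\mathcal{C}'$ is a Hamiltonian cycle in $A$ with the property that $w_{\gamma'_i}\ge a_{\gamma'_i\gamma'_{i+1}}w_{\gamma'_{i+1}}$ along $\mathcal{C}'$, i.e.\ $\mathcal{C}'\subseteq G(A,w)$, so $w\in\varepsilon_A(\mathcal{C}')$. Moreover the product of the $\le 1$ entries along $\mathcal{C}'$ in $A'$ (equivalently in $A$, since conjugation by $D$ does not change cycle products) is $<1$, so $\mathcal{C}'\in\pi_{<1}(A)$. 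Finally $\varepsilon_A(\mathcal{C})=\{w\}\subseteq\varepsilon_A(\mathcal{C}')$, which is the desired containment.

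The main obstacle I anticipate is the converse direction of the first equivalence: verifying carefully that when $p<1$ the two extreme points produced by Lemma~\ref{lempty} (dropping inequality $k$ versus inequality $k'$) are actually non-proportional, rather than all $n$ of them collapsing to a single ray. The key point is that the relation $w_{\gamma_{k+1}}=(p\, a_{\gamma_{k+1}\gamma_k})w_{\gamma_k}$ shows the $k$-th inequality becomes $w_{\gamma_k}\ge a_{\gamma_k\gamma_{k+1}}w_{\gamma_{k+1}}=p^{-1}\cdot(\text{something})$—it is satisfied \emph{strictly} precisely because $p<1$—so the vector obtained by dropping inequality $k$ sits strictly in the interior of that facet's inequality and differs from the one obtained by dropping a different inequality $k'$, where instead the $k$-th inequality is tight. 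Making this ``strictly interior vs.\ tight'' contrast precise, and handling the wrap-around indexing ($k=n$ case) uniformly, is the one place where care is needed; everything else is bookkeeping on top of Lemmas~\ref{lsingleton0}, \ref{lsim}, and \ref{lempty}.
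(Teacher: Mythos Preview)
Your proposal is correct and follows essentially the same route as the paper (diagonal similarity plus Lemma~\ref{lsingleton0} for the ``moreover'' clause, and the chain (\ref{f222}) forcing all equalities when $p=1$). The only difference is that for $p<1$ the paper sidesteps the obstacle you flagged by a direct perturbation rather than comparing two extreme rays: after normalizing so that the first $n-1$ entries along $\mathcal{C}$ equal $1$, it takes the Lemma~\ref{lempty} vector with the first $n-1$ inequalities tight (so $w$ is constant and the $n$-th inequality is strict) and simply decreases $w_{\gamma_n}$ slightly to obtain a second, non-proportional element of $\varepsilon_A(\mathcal{C})$.
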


\begin{proof}
Since a diagonal similarity does not change any cycle product in $A$, and by a
diagonal similarity on $A$ any $n-1$ entries along $\mathcal{C}$ may be made
$1$ (and the remaining entry is $\leq1,$ since $\mathcal{C}\in\pi_{1}(A)$),
taking into account Lemma \ref{lsim}, we assume that this situation occurs in
order to prove the result.

When the product for $\mathcal{C}$ in $A$ is $1,$ all entries in $A$ along
$\mathcal{C}$ are $1.$ Then, from (\ref{f111}) (with $\mathcal{C}$ as in
(\ref{cyclegama})), $w\in\varepsilon_{A}(\mathcal{C})$ if and only if, up to a
positive factor, $w$ is the vector of $1$'s, implying that
$\mathcal{\varepsilon}_{A}(\mathcal{C})$ is a singleton. When the product of
$\mathcal{C}$ in $A$ is $<$ $1,$ by Lemma \ref{lempty}, there is a $w$
satisfying (\ref{f111}) in which the first $n-1$ inequalities are taken to be
equalities. Then, the $n$-th inequality is strict. Any vector $w^{\prime}$
obtained from $w$ by a sufficiently small decrease of $w_{\gamma_{n}},$ and
agreeing with $w$ in all other components, still satisfies (\ref{f111}) (with
$w_{\gamma_{n}}^{\prime}$ instead of $w_{\gamma_{n}}$). So
$\mathcal{\varepsilon}_{A}(\mathcal{C})$ is not a singleton. This completes
the proof of the first claim.

Suppose that the product along $\mathcal{C}$ in $A$ (inconsistent) is $1,$ in
which case all entries in $A\ $along $\mathcal{C}$ are $1$ (as we are assuming
that there are $n-1$ such entries equal to $1$). Thus, all vectors $w$ in
$\varepsilon_{A}(\mathcal{C})$ are constant. By Lemma \ref{lsingleton0}, there
is a Hamiltonian cycle $\mathcal{C}^{\prime}$ in $A$ such that all entries in
$A$ along $\mathcal{C}^{\prime}$ are $\leq1$ and there is at least one entry
$<1.$ For $\mathcal{C}^{\prime}$ as in (\ref{cyclegama}), it follows that $w$
satisfies (\ref{f111}), implying that $w\in\varepsilon_{A}(\mathcal{C}%
^{\prime}).$ Since $\mathcal{C}^{\prime}\in\pi_{<1}(A),$ the last claim follows.
\end{proof}

\bigskip

We observe that, if the product along a Hamiltonian cycle $\mathcal{C}$ in $A$
is $1,$ then so is the product along the reverse cycle $\mathcal{C}^{r}$ of
$\mathcal{C},$ and $\mathcal{\varepsilon}_{A}(\mathcal{C}%
)=\mathcal{\varepsilon}_{A}(\mathcal{C}^{r})$.

\subsection{Main result}

Now, we may give the fundamental theorem on the cycle structure of reciprocal
matrices and their efficient vectors. If $A\in\mathcal{PC}_{n}$ is consistent,
then $\mathcal{E}(A)$ is the singleton generated by any column of $A,$
otherwise, since each column is in $\mathcal{E}(A)$ and not all columns are
proportional, $\mathcal{E}(A)$ is not a singleton. We assume that $A$ is inconsistent.

\begin{theorem}
\label{tt2}Suppose that $A\in\mathcal{PC}_{n}$ is inconsistent. If
$w\in\mathcal{E}(A)$ and $\mathcal{C}$ is a Hamiltonian cycle in $G(A,w),$
then $\mathcal{C}\in\pi_{1}(A).$ On the other hand, if $\mathcal{C}\in\pi
_{1}(A),$ then the set
\[
\mathcal{\varepsilon}_{A}(\mathcal{C})=\left\{  w:G(A,w)\text{ contains
}\mathcal{C}\right\}
\]
is a nonempty, convex subset of $\mathcal{E}(A).$ Moreover,
\begin{equation}
\mathcal{E}(A)=%
{\displaystyle\bigcup\limits_{\mathcal{C}\in\pi_{<1}(A)}}
\mathcal{\varepsilon}_{A}(\mathcal{C}). \label{union}%
\end{equation}

\end{theorem}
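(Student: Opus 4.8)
The plan is to establish the three assertions of Theorem \ref{tt2} in order, each reducing to a previously proved fact.

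First, suppose $w\in\mathcal{E}(A)$ and $\mathcal{C}$ is a Hamiltonian cycle in $G(A,w)$, say $\mathcal{C}$ is as in (\ref{cyclegama}). Then the inequalities in (\ref{f111}) all hold, and multiplying them together (the telescoping computation already displayed in the chain (\ref{f222})) gives
\[
w_{\gamma_{1}}\geq\left(a_{\gamma_{n}\gamma_{1}}\prod_{i=1}^{n-1}a_{\gamma_{i}\gamma_{i+1}}\right)w_{\gamma_{1}},
\]
and since $w_{\gamma_1}>0$ this forces the cycle product to be $\leq 1$, i.e. $\mathcal{C}\in\pi_{1}(A)$. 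This is the easy direction.

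Next, fix $\mathcal{C}\in\pi_{1}(A)$. That $\varepsilon_{A}(\mathcal{C})$ is convex is exactly Lemma \ref{ll2} (or the matricial reformulation as a polyhedral cone $(I_n-SP)w_\gamma\geq 0$, $w_\gamma>0$), and $\varepsilon_{A}(\mathcal{C})\subseteq\mathcal{E}(A)$ is immediate from Theorem \ref{blanq}(iii) since any $w$ with $\mathcal{C}\subseteq G(A,w)$ has a Hamiltonian cycle in its graph. Nonemptiness is Lemma \ref{lempty}: setting any $n-1$ of the inequalities in (\ref{f111}) to equalities produces a positive solution, so $\varepsilon_{A}(\mathcal{C})\neq\varnothing$.

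Finally, for the union formula (\ref{union}). The inclusion $\supseteq$ is clear since each $\varepsilon_{A}(\mathcal{C})\subseteq\mathcal{E}(A)$. For $\subseteq$, take $w\in\mathcal{E}(A)$. By Theorem \ref{blanq}, $G(A,w)$ is strongly connected and, being semi-complete, contains a Hamiltonian cycle $\mathcal{C}$; thus $w\in\varepsilon_{A}(\mathcal{C})$ and, by the first part, $\mathcal{C}\in\pi_{1}(A)$. It remains to replace $\mathcal{C}$ by a cycle in $\pi_{<1}(A)$ — this is the one genuinely nontrivial point, and it is precisely where Lemma \ref{lltt} is used. If the cycle product of $\mathcal{C}$ is already $<1$ we are done. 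If it equals $1$, Lemma \ref{lltt} (second statement) gives $\mathcal{C}'\in\pi_{<1}(A)$ with $\varepsilon_{A}(\mathcal{C})\subseteq\varepsilon_{A}(\mathcal{C}')$, so $w\in\varepsilon_{A}(\mathcal{C}')$ with $\mathcal{C}'\in\pi_{<1}(A)$. In either case $w$ lies in the right-hand side of (\ref{union}). The inconsistency hypothesis is needed exactly to invoke Lemma \ref{lltt} (via Lemma \ref{lsingleton0}): a consistent matrix could have all Hamiltonian cycle products equal to $1$, leaving $\pi_{<1}(A)$ empty, so this step is where the assumption does real work and is the main obstacle the auxiliary lemmas were designed to overcome.
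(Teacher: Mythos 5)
Your proposal is correct and follows essentially the same route as the paper's proof: the telescoped product of (\ref{f111}) for the first direction, Lemmas \ref{lempty} and \ref{ll2} together with Theorem \ref{blanq} for nonemptiness, convexity and containment, and the second statement of Lemma \ref{lltt} to pass from $\pi_{1}(A)$ to $\pi_{<1}(A)$ in the union. Your closing remark about where the inconsistency hypothesis does its work matches the paper's use of Lemma \ref{lsingleton0} via Lemma \ref{lltt} (the paper also notes an alternative argument via connectedness of $\mathcal{E}(A)$, but this is optional).
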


\begin{proof}
If $w\in\mathcal{E}(A)$ and $\mathcal{C}$, as in (\ref{cyclegama}), is a
Hamiltonian cycle in $G(A,w),$ then $w\in\varepsilon_{A}(\mathcal{C}%
).\ $Hence, (\ref{f222}) holds and the inequality between the left most and
the right most expressions imply
\[
a_{\gamma_{n}\gamma_{1}}%
{\displaystyle\prod\limits_{i=1}^{n-1}}
a_{\gamma_{i}\gamma_{i+1}}\leq1,
\]
which means that $\mathcal{C}\in\pi_{1}(A).$

Suppose that $\mathcal{C}\in\pi_{1}(A).$ The set $\varepsilon_{A}%
(\mathcal{C})$ is nonempty by Lemma \ref{lempty}. As, for $w\in
\mathcal{\varepsilon}_{A}(\mathcal{C})$, $G(A,w)$ contains the cycle
$\mathcal{C}$, by Theorem \ref{blanq}, $w\in\mathcal{E}(A).$ Also, note that,
as all such $G(A,w)$ contain $\mathcal{C}$, the set is convex by Lemma
\ref{ll2} (alternatively, (polyhedral) convexity follows from the fact that
$w\in\mathcal{\varepsilon}_{A}(\mathcal{C})$ is determined by the finite
system of linear inequalities (\ref{f111})). Since, by Theorem \ref{blanq},
each $w\in\mathcal{E}(A)\ $lies in $\varepsilon_{A}(\mathcal{C})$ for some
$\mathcal{C}$, and, by the first part of the proof, $\mathcal{C}\in\pi_{1}%
(A)$, it follows that
\begin{equation}
\mathcal{E}(A)=%
{\displaystyle\bigcup\limits_{\mathcal{C}\in\pi_{1}(A)}}
\mathcal{\varepsilon}_{A}(\mathcal{C}). \label{uun}%
\end{equation}
Now (\ref{union}) follows taking into account the last claim in Lemma
\ref{lltt}. This conclusion also follows from the following alternate
argument. Since $\mathcal{E}(A)$ is connected \cite{blanq2006} and is not a
singleton (as $A$ is inconsistent), it follows that any singleton
$\mathcal{\varepsilon}_{A}(\mathcal{C})$ in the union in (\ref{uun}) should be
contained in a non-singleton $\mathcal{\varepsilon}_{A}(\mathcal{C}^{\prime
}),$ which, by Lemma \ref{lltt}, implies that $\mathcal{C}^{\prime}\in\pi
_{<1}(A).$
\end{proof}

\bigskip

Note that $\pi_{<1}(A)$ has no more than $\frac{(n-1)!}{2}$ elements and has
less if there are Hamiltonian cycles from $A$ with product $1$. If $n=3,$ then
$\frac{(n-1)!}{2}=1$ which means that $\mathcal{E}(A)=\varepsilon
_{A}(\mathcal{C})$ for some cycle $\mathcal{C}\in$ $\pi_{1}(A)$ (in fact,
$\mathcal{C}\in\pi_{<1}(A)$ if $A$ is inconsistent)$,$ which is another way to
see that $\mathcal{E}(A)$ is convex for $A\in\mathcal{PC}_{3}.$

\bigskip

We next give some consequences of Theorem \ref{tt2}. Since the results can be
trivially verified for consistent matrices, we state them for general
reciprocal matrices.

\begin{corollary}
\label{tt1}Suppose $A\in\mathcal{PC}_{n}.$ If there exists a Hamiltonian cycle
that lies in $G(A,w)$ for every $w\in\mathcal{E}(A),$ then $\mathcal{E}(A)$ is
convex. In particular, this holds if there is just one Hamiltonian cycle
product $<1$ in $A$.
\end{corollary}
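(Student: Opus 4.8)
The plan is to prove the contrapositive direction of the implication via Theorem~\ref{tt2}, and then deduce the ``in particular'' clause as a special case. Suppose $\mathcal{C}$ is a Hamiltonian cycle that lies in $G(A,w)$ for every $w\in\mathcal{E}(A)$. By definition of $\varepsilon_A(\mathcal{C})$ this says exactly that $\mathcal{E}(A)\subseteq\varepsilon_A(\mathcal{C})$. On the other hand, since every $w\in\varepsilon_A(\mathcal{C})$ has $G(A,w)$ containing a Hamiltonian cycle, Theorem~\ref{blanq} gives $\varepsilon_A(\mathcal{C})\subseteq\mathcal{E}(A)$. Hence $\mathcal{E}(A)=\varepsilon_A(\mathcal{C})$, and this set is convex by Lemma~\ref{ll2} (or directly because it is the solution set of the finite linear system~(\ref{f111})). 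This handles the general reciprocal case; note that when $A$ is consistent the statement is trivial since $\mathcal{E}(A)$ is then a singleton, so the substantive content is the inconsistent case, where Theorem~\ref{tt2} applies directly.

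For the ``in particular'' clause, suppose there is exactly one Hamiltonian cycle product $<1$ in $A$, arising from a cycle $\mathcal{C}_0$ (when $A$ is inconsistent; if $A$ is consistent there is nothing to prove). Then $\pi_{<1}(A)=\{\mathcal{C}_0\}$ (together with its reverse, which yields the same set $\varepsilon_A$), so the union~(\ref{union}) in Theorem~\ref{tt2} collapses to the single set $\varepsilon_A(\mathcal{C}_0)$. Therefore $\mathcal{E}(A)=\varepsilon_A(\mathcal{C}_0)$, which lies in $G(A,w)$ for every $w\in\mathcal{E}(A)$ by definition, and we are in the situation of the first part; convexity follows. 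One should double-check the degenerate possibility that some Hamiltonian cycle has product exactly $1$: if so, Lemma~\ref{lltt} says its $\varepsilon_A$ set is a singleton contained in $\varepsilon_A(\mathcal{C}')$ for some $\mathcal{C}'\in\pi_{<1}(A)$, hence contained in $\varepsilon_A(\mathcal{C}_0)$, so this causes no difficulty.

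I expect no real obstacle here: the corollary is essentially a direct reading of Theorem~\ref{tt2} plus the observation that $\varepsilon_A(\mathcal{C})$ is convex. The only point requiring a moment's care is the logical bookkeeping between ``$\mathcal{C}$ is in $G(A,w)$ for all efficient $w$'' and ``$\mathcal{E}(A)=\varepsilon_A(\mathcal{C})$'': one inclusion is the hypothesis restated, the other is $\varepsilon_A(\mathcal{C})\subseteq\mathcal{E}(A)$, which is already recorded right after the definition of $\varepsilon_A(\mathcal{C})$. For the ``in particular'' clause, the subtlety is that ``just one Hamiltonian cycle product $<1$'' must be interpreted so as to rule out the case where $\mathcal{E}(A)$ is a non-singleton but $\pi_{<1}(A)$ is empty; but if $A$ is inconsistent then $\mathcal{E}(A)$ is not a singleton, and by~(\ref{union}) it is a union of the sets $\varepsilon_A(\mathcal{C})$ over $\mathcal{C}\in\pi_{<1}(A)$, so $\pi_{<1}(A)$ is necessarily nonempty, and ``just one'' then forces it to be exactly $\{\mathcal{C}_0\}$.
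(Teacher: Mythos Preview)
Your proof is correct and follows essentially the same route as the paper: the first claim is immediate from Lemma~\ref{ll2} (the paper notes this explicitly just after the corollary), and the ``in particular'' clause is read off from the union~(\ref{union}) in Theorem~\ref{tt2}. Two small slips worth cleaning up: you write ``contrapositive'' in the opening sentence but actually argue directly; and the parenthetical about the reverse cycle is mistaken, since if $\mathcal{C}_0$ has product $<1$ its reverse has product $>1$ and therefore does not belong to $\pi_{<1}(A)$ at all (nor does $\varepsilon_A(\mathcal{C}_0^r)$ coincide with $\varepsilon_A(\mathcal{C}_0)$ in that case), so the remark is unnecessary and should simply be deleted.
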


Observe that the first claim in Corollary \ref{tt1} is also a consequence of
Lemma \ref{ll2}.

\begin{corollary}
\label{c1}Suppose $A\in\mathcal{PC}_{n}.$ If all columns of $A$ are in
$\mathcal{\varepsilon}_{A}(\mathcal{C})$ for some $\mathcal{C}\in\pi_{1}(A)$,
then the cone generated by the columns of $A$ is contained in $\mathcal{E}%
(A)$. In particular, the Perron vector and the singular vector of $A$ are
efficient for $A.$
\end{corollary}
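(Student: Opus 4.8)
The plan is to deduce Corollary \ref{c1} directly from Theorem \ref{tt2} together with the already-established fact that every column of $A$ is efficient \cite{FJ1}. First I would observe that the hypothesis gives us a single Hamiltonian cycle $\mathcal{C}\in\pi_{1}(A)$ with all $n$ columns of $A$ lying in $\varepsilon_{A}(\mathcal{C})$. By Theorem \ref{tt2} (or directly by Lemma \ref{ll2}), $\varepsilon_{A}(\mathcal{C})$ is a convex subset of $\mathcal{E}(A)$. Since it contains each column $A_{e_1},\ldots,A_{e_n}$, it contains every convex combination of the columns; but the columns are entry-wise positive, so scaling a convex combination by an arbitrary positive constant stays inside $\varepsilon_{A}(\mathcal{C})$ as well --- indeed $\varepsilon_{A}(\mathcal{C})$ is a cone by Lemma \ref{lempty}, being the conical hull of its $n$ extreme vectors. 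Hence the cone generated by the columns of $A$, i.e.\ all nonzero nonnegative linear combinations, is contained in $\varepsilon_{A}(\mathcal{C})\subseteq\mathcal{E}(A)$.

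For the ``in particular'' clause I would invoke \cite{FJ4}, already cited in the introduction, which states that the Perron vector of $A$ and the left singular vector (the Perron vector of $AA^{T}$) both lie in the cone generated by the columns of $A$. Combining this with the inclusion just established gives that both vectors are efficient for $A$. I should be slightly careful about the consistent case: if $A$ is consistent the statement is vacuously fine since $\mathcal{E}(A)$ is a singleton generated by any column, the Perron and singular vectors coincide with it, and the hypothesis trivially holds with $\mathcal{C}$ any Hamiltonian cycle; the remark preceding the corollary (``the results can be trivially verified for consistent matrices'') covers this, so I would simply note it in one line and assume $A$ inconsistent for the main argument, where Theorem \ref{tt2} applies cleanly.

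There is no real obstacle here --- the content is entirely in Theorem \ref{tt2} and in the external results \cite{FJ1,FJ4}. The one point worth stating explicitly, rather than glossing, is why convexity of $\varepsilon_{A}(\mathcal{C})$ upgrades to conical: this is because $G(A,w)$ depends only on the ray through $w$ (each defining inequality $w_i\ge a_{ij}w_j$ is homogeneous), so $\varepsilon_{A}(\mathcal{C})$ is automatically closed under positive scaling, and a convex set closed under positive scaling that contains a spanning set of vectors contains their whole conical hull. I would devote one sentence to this and keep the rest of the proof to two or three lines.
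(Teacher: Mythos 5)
Your proposal is correct and follows exactly the argument the paper intends: $\varepsilon_{A}(\mathcal{C})$ is a convex cone (being cut out by the homogeneous inequalities (\ref{f111}), or by Lemma \ref{lempty}), so containing all columns it contains their conical hull, and the efficiency of the Perron and singular vectors then follows from their membership in that cone as recorded in \cite{FJ4}. Your explicit remark on why convexity upgrades to conicality is a worthwhile addition that the paper leaves implicit.
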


It is known that in general $\mathcal{E}(A)$ is not closed under entry-wise
geometric mean \cite{FJ1}. However, we note that each subset
$\mathcal{\varepsilon}_{A}(\mathcal{C})$ is (in fact, it is closed under
entry-wise weighted geometric means), because it is defined by inequalities
(\ref{f111}).

\begin{corollary}
Suppose $A\in\mathcal{PC}_{n}.$ If there exists a Hamiltonian cycle that lies
in $G(A,w)$ for every $w\in\mathcal{E}(A),$ then $\mathcal{E}(A)$ is closed
under entry-wise weighted geometric means.
\end{corollary}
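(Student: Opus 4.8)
The plan is to combine the hypothesis with the union decomposition of Theorem~\ref{tt2} and the fact, already noted in the paragraph preceding this corollary, that each $\mathcal{\varepsilon}_{A}(\mathcal{C})$ is closed under entry-wise weighted geometric means (being cut out by the inequalities~(\ref{f111})). By Corollary~\ref{tt1}, the hypothesis that some fixed Hamiltonian cycle $\mathcal{C}_{0}$ lies in $G(A,w)$ for every $w\in\mathcal{E}(A)$ already gives $\mathcal{E}(A)=\varepsilon_{A}(\mathcal{C}_{0})$: indeed every $w\in\mathcal{E}(A)$ lies in $\varepsilon_{A}(\mathcal{C}_{0})$ by definition of the latter, and conversely $\varepsilon_{A}(\mathcal{C}_{0})\subseteq\mathcal{E}(A)$ by Theorem~\ref{blanq}. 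If $A$ is consistent this is the singleton generated by a column of $A$ and the claim is trivial, so one may assume $A$ inconsistent; but in fact the identification $\mathcal{E}(A)=\varepsilon_{A}(\mathcal{C}_{0})$ holds regardless.

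Once $\mathcal{E}(A)=\varepsilon_{A}(\mathcal{C}_{0})$ is in hand, the conclusion is immediate from the structure of $\varepsilon_{A}(\mathcal{C}_{0})$. Writing $\mathcal{C}_{0}$ as in~(\ref{cyclegama}), membership $w\in\varepsilon_{A}(\mathcal{C}_{0})$ is equivalent to the system of inequalities $w_{\gamma_{i}}\geq a_{\gamma_{i}\gamma_{i+1}}w_{\gamma_{i+1}}$ (indices cyclic). Given $u,v\in\varepsilon_{A}(\mathcal{C}_{0})$ and weights $s,t>0$ with $s+t=1$, the vector $w$ with $w_{\gamma_{i}}=u_{\gamma_{i}}^{s}v_{\gamma_{i}}^{t}$ satisfies, for each $i$,
\[
w_{\gamma_{i}}=u_{\gamma_{i}}^{s}v_{\gamma_{i}}^{t}\geq(a_{\gamma_{i}\gamma_{i+1}}u_{\gamma_{i+1}})^{s}(a_{\gamma_{i}\gamma_{i+1}}v_{\gamma_{i+1}})^{t}=a_{\gamma_{i}\gamma_{i+1}}w_{\gamma_{i+1}},
\]
using that $x\mapsto x^{s}$ and $x\mapsto x^{t}$ are increasing on the positive reals and that $(xy)^{s}(x'y')^{t}=(x^{s}x'^{t})(y^{s}y'^{t})$. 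Hence $w\in\varepsilon_{A}(\mathcal{C}_{0})=\mathcal{E}(A)$, which is exactly closure under entry-wise weighted geometric means.

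There is essentially no obstacle here; the statement is a direct corollary. The only point requiring a moment's care is the reduction in the first paragraph — making sure that the hypothesis really forces $\mathcal{E}(A)$ to equal a single $\varepsilon_{A}(\mathcal{C}_{0})$ rather than merely being contained in a union that all share the edge set of $\mathcal{C}_{0}$ — but this follows at once from the definition of $\varepsilon_{A}(\mathcal{C}_{0})$ as \emph{all} $w$ whose graph contains $\mathcal{C}_{0}$, together with Theorem~\ref{blanq}. Alternatively, one can bypass Corollary~\ref{tt1} and argue directly: the common Hamiltonian cycle $\mathcal{C}_{0}$ lies in both $G(A,u)$ and $G(A,v)$, so the weighted geometric mean $w$ computed above has $\mathcal{C}_{0}\subseteq G(A,w)$ by the displayed inequalities, whence $w\in\mathcal{E}(A)$ by Theorem~\ref{blanq}; this makes the proof self-contained modulo results stated earlier.
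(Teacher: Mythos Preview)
Your proof is correct and follows exactly the (implicit) argument the paper has in mind: the paragraph immediately preceding this corollary notes that each $\varepsilon_{A}(\mathcal{C})$ is closed under entry-wise weighted geometric means because it is defined by the inequalities~(\ref{f111}), and the hypothesis forces $\mathcal{E}(A)=\varepsilon_{A}(\mathcal{C}_{0})$, which you establish cleanly via Theorem~\ref{blanq}. The explicit verification you give of the geometric-mean closure is precisely the computation the paper leaves to the reader.
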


\bigskip We give next an example illustrating the previous results.

\begin{example}
\label{ex1}Let%
\[
A=\left[
\begin{array}
[c]{cccc}%
1 & 2 & 1 & \frac{1}{2}\\
\frac{1}{2} & 1 & 2 & 1\\
1 & \frac{1}{2} & 1 & 2\\
2 & 1 & \frac{1}{2} & 1
\end{array}
\right]  .
\]
Note that there are $2$ cycle products in $A$ equal to $1,$%
\begin{align*}
\mathcal{C}_{1}  &  :1\rightarrow2\rightarrow4\rightarrow3\rightarrow1,\\
\mathcal{C}_{2}  &  :1\rightarrow3\rightarrow2\rightarrow4\rightarrow1,
\end{align*}
and their reverses, and the cycle product $<1$%
\[
\mathcal{C}_{3}:1\rightarrow4\rightarrow3\rightarrow2\rightarrow1.
\]
By Theorem \ref{tt2}, $\mathcal{E}(A)=\mathcal{\varepsilon}_{A}(\mathcal{C}%
_{3}),$ and, so, $\mathcal{E}(A)$ is convex. In particular, the Perron vector
and the singular vector of $A$ are efficient for $A.$ Also, the set
$\mathcal{E}(A)$ is closed under entry-wise weighted geometric means. Note
that all the columns of $A$ lie in $\mathcal{\varepsilon}_{A}(\mathcal{C}%
_{3})$, as they are efficient for $A.$ Using (\ref{f111}), we can see that
\[
\mathcal{E}(A)=\mathcal{\varepsilon}_{A}(\mathcal{C}_{3})=\left\{
w\in\mathbb{R}_{+}^{4}:w_{1}\geq\frac{1}{2}w_{4}\geq\frac{1}{4}w_{3}\geq
\frac{1}{8}w_{2}\geq\frac{1}{16}w_{1}\right\}  .
\]
We also have%
\begin{align*}
\mathcal{\varepsilon}_{A}(\mathcal{C}_{1})  &  =\left\{  w\in\mathbb{R}%
_{+}^{4}:w_{1}=2w_{2}=2w_{4}=w_{3}\right\}  ,\\
\mathcal{\varepsilon}_{A}(\mathcal{C}_{2})  &  =\left\{  w\in\mathbb{R}%
_{+}^{4}:w_{1}=w_{3}=\frac{1}{2}w_{2}=\frac{1}{2}w_{4}\right\}  ,
\end{align*}
which can be verified to be contained in $\mathcal{\varepsilon}_{A}%
(\mathcal{C}_{3}),$ as expected. We observe that $A$ is not a simple perturbed
consistent matrix, as it is nonsingular. Thus, this example illustrates that
the convexity of $\mathcal{E}(A)$ may occur for other matrices $A$ than the
simple perturbed consistent matrices. The matrix $A$ is also not double perturbed.
\end{example}

\section{Order reversals\label{sorderrev}}

Here we give some consequences of Theorem \ref{tt2} regarding the existence of
order reversals in an efficient vector. Of course, if $A$ is consistent, no
efficient vector for $A$ exhibits an order reversal, as $\frac{w_{i}}{w_{j}%
}=a_{ij}$ for all $i,j.$

\begin{theorem}
\label{trev}Suppose that $A\in\mathcal{PC}_{n}$ is inconsistent and
$\mathcal{C}\in\pi_{1}(A).$ Then, there is a $w\in\varepsilon_{A}%
(\mathcal{C)}$ that exhibits no order reversal with entries of $A$ along
$\mathcal{C}$ if and only if either there is an entry of $A$ $>1$ along
$\mathcal{C}$ or the cycle product is $1.$ Otherwise, there is a
$w\in\varepsilon_{A}(\mathcal{C)}$ with exactly $1$ order reversal along
$\mathcal{C}$.
\end{theorem}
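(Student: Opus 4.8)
The plan is to work, after a diagonal similarity, in the normalized situation used throughout Section~\ref{s3}: by Lemma~\ref{lsim} and the fact that diagonal similarity changes neither cycle products nor the pattern of order reversals along $\mathcal{C}$, I may assume $\mathcal{C}$ is the cycle $1\rightarrow 2\rightarrow\cdots\rightarrow n\rightarrow 1$ and that $n-1$ of the entries of $A$ along $\mathcal{C}$, say $a_{12},a_{23},\dots,a_{n-1,n}$, equal $1$, with the remaining entry $a_{n1}\le 1$ since $\mathcal{C}\in\pi_{1}(A)$. In these coordinates an order reversal along $\mathcal{C}$ at a consecutive pair $(\gamma_i,\gamma_{i+1})$ with $a_{\gamma_i\gamma_{i+1}}=1$ means $w_{\gamma_i}\neq w_{\gamma_{i+1}}$; at the pair $(\gamma_n,\gamma_1)$ with $a_{n1}<1$ it means $w_{\gamma_n}\le a_{n1}^{-1}\cdot$(something)… more precisely, using the definition in the paper, a reversal there means $w_{\gamma_n}\le w_{\gamma_1}$ (rather than the ``correct'' $w_{\gamma_n}>w_{\gamma_1}$ forced by $a_{n1}<1$). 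The inequalities~(\ref{f111}) become $w_1\ge w_2\ge\cdots\ge w_n\ge a_{n1}w_1$.

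First I would handle the two ``if'' cases. If the cycle product is $1$, then (in the normalized picture) all entries of $A$ along $\mathcal{C}$ are $1$, $\varepsilon_A(\mathcal{C})$ is the singleton of constant vectors by Lemma~\ref{lltt}, and the constant vector exhibits no order reversal along $\mathcal{C}$; done. If instead some entry of $A$ along $\mathcal{C}$ is $>1$: here I cannot stay in the fully normalized picture (which has $n-1$ entries equal to $1$ and one $\le 1$), so I would renormalize by a different diagonal similarity so that the $n-1$ entries other than some chosen $a_{\gamma_k\gamma_{k+1}}>1$ are all $1$; then the remaining entry along the cycle has the product value, which is $\le1$, and the ``$>1$'' entry is the one we moved. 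Actually a cleaner route: keep the original $A$, and simply exhibit a vector. I want $w$ with $w_{\gamma_i}/w_{\gamma_{i+1}}$ on the ``correct side'' of $1$ relative to $a_{\gamma_i\gamma_{i+1}}$ for every $i$. The strict chain~(\ref{f222}) says $w_{\gamma_1}\ge a_{\gamma_1\gamma_2}w_{\gamma_2}\ge\cdots$; the point is that~(\ref{f111}) only forces $w_{\gamma_i}\ge a_{\gamma_i\gamma_{i+1}}w_{\gamma_{i+1}}$, and I have freedom in how much slack to put at each step, subject to the total slack absorbing the factor $1/(\text{cycle product})\ge1$ when we close the loop. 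Choosing, for each $i$ with $a_{\gamma_i\gamma_{i+1}}\ge1$, equality $w_{\gamma_i}=a_{\gamma_i\gamma_{i+1}}w_{\gamma_{i+1}}$ (no reversal there, and if $a_{\gamma_i\gamma_{i+1}}>1$ this gives $w_{\gamma_i}>w_{\gamma_{i+1}}$, the correct direction), and for each $i$ with $a_{\gamma_i\gamma_{i+1}}<1$ a strict inequality $w_{\gamma_i}>w_{\gamma_{i+1}}$ (again the correct direction), I get a vector with no reversal along $\mathcal{C}$ provided the product of the chosen ratios around the cycle is consistent, i.e.\ equals $1$; the presence of at least one index with $a_{\gamma_i\gamma_{i+1}}>1$ and at least one with $a_{\gamma_i\gamma_{i+1}}<1$ (which holds because the product is $\le1$ but not all entries are $\le 1$, as one is $>1$) gives enough room: set all ratios at the ``$\ge1$'' and ``$<1$'' steps to recover exactly the reciprocal of the product, spreading the needed extra factor over the $<1$ steps. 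This is the step I expect to require the most care — verifying that one can distribute the correction so that every ratio lands strictly on the correct side of its $a_{\gamma_i\gamma_{i+1}}$, in particular keeping the $a_{\gamma_i\gamma_{i+1}}<1$ steps with ratio $>1$ (not $=1$, which would itself be a reversal) and ensuring positivity.

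For the ``only if'' direction, suppose every entry of $A$ along $\mathcal{C}$ is $\le1$ and the cycle product is $<1$. I claim no $w\in\varepsilon_A(\mathcal{C})$ avoids all reversals along $\mathcal{C}$, and moreover some $w$ has exactly one. Normalize so $a_{\gamma_1\gamma_2}=\cdots=a_{\gamma_{n-1}\gamma_n}=1$ and $a_{\gamma_n\gamma_1}=p<1$, with $\varepsilon_A(\mathcal{C})=\{w:w_{\gamma_1}\ge w_{\gamma_2}\ge\cdots\ge w_{\gamma_n}\ge p\,w_{\gamma_1}\}$. ``No reversal'' would require $w_{\gamma_1}=w_{\gamma_2}=\cdots=w_{\gamma_n}$ (since each $a_{\gamma_i\gamma_{i+1}}=1$ forbids $w_{\gamma_i}\neq w_{\gamma_{i+1}}$) \emph{and} at the last pair, since $p<1$, we would need $w_{\gamma_n}>w_{\gamma_1}$ — contradicting constancy. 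Hence every $w\in\varepsilon_A(\mathcal{C})$ has at least one reversal. To get exactly one, take the extreme vector from Lemma~\ref{lempty} in which the first $n-1$ inequalities are equalities: $w_{\gamma_1}=\cdots=w_{\gamma_n}=1$; then the last inequality $1\ge p\cdot1$ is strict, so the only reversal is at the pair $(\gamma_n,\gamma_1)$. Finally I must check that ``no reversal along $\mathcal{C}$'' is genuinely coordinate-free, i.e.\ that the dichotomy ``some entry $>1$ or product $=1$'' versus its negation is invariant under the diagonal similarities used — which it is, since diagonal similarity preserves both the multiset of cycle entries up to the rescaling and, crucially, the cycle product and hence which side of $1$ it lies on, and the notion of order reversal along $\mathcal{C}$ is defined via the $a_{ij}$ of the \emph{given} matrix; so I should phrase the normalization carefully and note that the conclusion about the specific witness $w$ transfers back via $w\mapsto D^{-1}w$ using Lemma~\ref{lsim}. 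Assembling these pieces yields the theorem. $\blacksquare$
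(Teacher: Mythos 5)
There is a genuine gap, and it sits at the very first step. Your reduction by diagonal similarity to the case where $n-1$ of the entries of $A$ along $\mathcal{C}$ equal $1$ is not legitimate for this theorem: a diagonal similarity preserves cycle products, but it preserves neither the pattern of order reversals (the reversal condition compares $w_i/w_j$ with $1$ and $a_{ij}$ with $1$, and both ratios are multiplied by the same factor $d_i/d_j$, which can push them to opposite sides of $1$) nor the condition ``some entry of $A$ along $\mathcal{C}$ is $>1$.'' Concretely, for $n=3$ with $a_{12}=2$, $a_{23}=\frac{1}{2}$, $a_{31}=\frac{1}{4}$, the cycle $1\rightarrow2\rightarrow3\rightarrow1$ has product $\frac{1}{4}<1$ and an entry $>1$, and $w=(4,2,3)^{T}\in\varepsilon_{A}(\mathcal{C})$ has no reversal along $\mathcal{C}$; after your normalization ($d=(1,2,1)$) the cycle entries become $1,1,\frac{1}{4}$, no entry is $>1$, and the theorem correctly says no reversal-free vector exists for the normalized matrix (indeed $Dw=(4,4,3)^{T}$ acquires a reversal). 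So both sides of the equivalence change truth value under your reduction, and your ``only if'' argument and your ``exactly one reversal'' witness, carried out entirely in normalized coordinates, do not transfer back to $A$. The paper instead works with the original entries: necessity is the telescoping identity that the product of $w_i/w_j$ over the edges $i\rightarrow j$ of $\mathcal{C}$ equals $1$, whereas each factor would be $\leq 1$ with one $<1$ if all entries along $\mathcal{C}$ were $\leq1$ with one $<1$ and $w$ had no reversal; sufficiency relabels the cycle (a choice of starting edge, not a diagonal similarity) so that an entry $\geq1$ sits at the closing edge $(\gamma_n,\gamma_1)$, takes the first $n-1$ inequalities of (\ref{f111}) to be equalities so that $w_{\gamma_i}/w_{\gamma_{i+1}}=a_{\gamma_i\gamma_{i+1}}$ exactly (trivially no reversal there), and observes that the implied closing inequality $w_{\gamma_n}\geq a_{\gamma_n\gamma_1}w_{\gamma_1}>w_{\gamma_1}$ removes the last possible reversal; the same witness gives at most one reversal in the remaining case.

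There is also a sign error that infects your ``cleaner route.'' With the paper's definition, an order reversal at $i,j$ with $a_{ij}<1$ occurs when $w_i\geq w_j$, so the reversal-free direction is $w_i<w_j$ --- not $w_i>w_j$, as you assert when you choose, ``for each $i$ with $a_{\gamma_i\gamma_{i+1}}<1$, a strict inequality $w_{\gamma_i}>w_{\gamma_{i+1}}$ (again the correct direction).'' As written, that construction produces a reversal at every edge of $\mathcal{C}$ carrying an entry $<1$. The repair is exactly the paper's construction described above: set $w_{\gamma_i}/w_{\gamma_{i+1}}=a_{\gamma_i\gamma_{i+1}}$ at all edges except one carrying an entry $>1$, and let that edge absorb the factor $1/(\text{cycle product})\geq1$, which keeps its ratio on the correct side of $1$.
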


\begin{proof}
Suppose that there is a $w\in\varepsilon_{A}(\mathcal{C)}$ that exhibits no
order reversal with entries of $A$ along $\mathcal{C}$ and all entries along
$\mathcal{C}$ are $\leq1,$ with at least one inequality being strict. Then
$\frac{w_{i}}{w_{j}}$ would be $\leq1$ for the efficient vector $w$ and all
$i,j$ along the cycle, with at least one inequality being strict, implying%
\[
1=%
{\displaystyle\prod\limits_{i,j\text{ along }\mathcal{C}}}
\frac{w_{i}}{w_{j}}<1,
\]
a contradiction. So, an entry $>1$ or all entries $1$ along the cycle is necessary.

Suppose there is an entry $>1$ or all entries are $1$ along $\mathcal{C}$. For
$\mathcal{C}$ as in (\ref{cyclegama}), suppose, without loss of generality,
that $a_{\gamma_{n}\gamma_{1}}\geq1$, with $a_{\gamma_{n}\gamma_{1}}=1$ if and
only if all entries along the cycle are $1.$ Now, use the inequalities
(\ref{f111}) and take the first $n-1$ of them to be equalities, so as to
define $w,$ taking $w_{1}=1$ (Lemma \ref{lempty}). The last inequality (which
is implied by the first $n-1$ inequalities) then ensures that there is no
order reversal at $\gamma_{n},\gamma_{1}$. In fact, $a_{\gamma_{n}\gamma_{1}%
}>1$ implies $\frac{w_{\gamma_{n}}}{w_{\gamma_{1}}}>1$. Also, $a_{\gamma
_{n}\gamma_{1}}=1$ implies, by hypothesis, all entries of $A$ along the cycle
equal to $1,$ and thus, by the $n-1$ equalities, $\frac{w_{\gamma_{j+1}}%
}{w_{\gamma_{j}}}=1,$ $1\leq j\leq n-1.$ Then, $\frac{w_{\gamma_{n}}%
}{w_{\gamma_{1}}}=1.$ There is no order reversal elsewhere along the cycle
because of the equalities. The same construction ensures at most one order
reversal without the assumption of an entry $>1$ or all entries $1$ along
$\mathcal{C}.$
\end{proof}

\bigskip

Theorem \ref{trev} concerns the existence of an efficient vector for $A$ with
at most $1$ order reversal along a cycle $\mathcal{C}\in\pi_{1}(A).$ The
vector may exhibit order reversals at positions in the matrix not along the cycle.

\section{Efficient vectors for column perturbed consistent matrices\label{s5}}

In this section, we study the efficient vectors for column perturbed
consistent matrices \textrm{\cite{FJ5,FJ4}.} Based on Lemma \ref{lsim} and the
following observation, we may assume that these matrices have a simple form.
By $J_{k}$ we denote the $k$-by-$k$ matrix with all entries $1.$

\begin{lemma}
If $B\in\mathcal{PC}_{n}$ is a column perturbed consistent matrix, then $B$ is
monomially similar to a matrix of the form%
\begin{equation}
A=\left[
\begin{array}
[c]{cc}%
1 &
\begin{array}
[c]{cccc}%
a_{12} & a_{13} & \cdots & a_{1n}%
\end{array}
\\%
\begin{array}
[c]{c}%
\frac{1}{a_{12}}\\
\frac{1}{a_{13}}\\
\vdots\\
\frac{1}{a_{1n}}%
\end{array}
& J_{n-1}%
\end{array}
\right]  \in\mathcal{PC}_{n}. \label{CPC}%
\end{equation}

\end{lemma}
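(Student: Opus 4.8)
The plan is to normalize which column of $B$ is perturbed, and then use a positive diagonal similarity to reduce the unperturbed part to all ones. Recall the definition: $B$ arises from a consistent matrix $C=vv^{(-T)}$, $v\in\mathbb{R}_+^n$, by replacing some column $k$ with an arbitrary positive vector whose $k$-th entry is $1$ and, to preserve reciprocity, replacing row $k$ with the entry-wise reciprocal of that vector; every entry whose two indices are both different from $k$ remains the consistent value $v_i/v_j$. Using that $\mathcal{PC}_n$ is closed under permutation similarity, conjugate $B$ by a permutation matrix carrying $k$ to $1$, so that we may assume the perturbed column is the first. Then the trailing principal submatrix $B[\{2,\ldots,n\}]$ has $(i,j)$ entry $v_i/v_j$ for $i,j\in\{2,\ldots,n\}$, hence equals $\tilde v\tilde v^{(-T)}$ with $\tilde v=[\,v_2\ \cdots\ v_n\,]^{T}$, and in particular is consistent.

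Next I would apply the positive diagonal similarity with $D=\operatorname{diag}(1,\,v_2^{-1},\ldots,v_n^{-1})$, which again keeps us inside $\mathcal{PC}_n$. For $i,j\in\{2,\ldots,n\}$ the $(i,j)$ entry of $DBD^{-1}$ is $v_i^{-1}(v_i/v_j)v_j=1$, so the trailing $(n-1)$-by-$(n-1)$ block becomes $J_{n-1}$; the $(1,1)$ entry is $1$, as for any reciprocal matrix. Denoting by $a_{1j}$, $j=2,\ldots,n$, the first-row entries of $DBD^{-1}$, reciprocity of $DBD^{-1}$ forces the first-column entries to be $1/a_{1j}$. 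Thus $DBD^{-1}$ has exactly the shape displayed in (\ref{CPC}), and since a composition of monomial similarities is again a monomial similarity, $B$ is monomially similar to a matrix of the stated form.

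I do not expect a genuine obstacle here. The one point that deserves care is the combinatorial observation underlying the reduction: after perturbing column $k$ and reciprocating row $k$, every entry lying neither in row $k$ nor in column $k$ is unchanged from $C$, which is precisely what forces the trailing block to be consistent. Once that is in hand, the remainder is the displayed computation together with the closure of $\mathcal{PC}_n$ under monomial similarity.
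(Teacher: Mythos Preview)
Your argument is correct and follows essentially the same approach as the paper: normalize the perturbed column to the first position by a permutation similarity and then kill the consistent block with a positive diagonal similarity built from the defining vector. The only cosmetic difference is the order of the two operations (the paper applies the diagonal similarity $D^{-1}=\operatorname{diag}(w)$ first and permutes afterward), which is immaterial.
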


\begin{proof}
Suppose that $B\ $is obtained from the consistent matrix $ww^{(-T)}$ by
modifying, say, row and column $i.$ Then, for $D^{-1}=\operatorname*{diag}%
(w),$ we have that $DBD^{-1}$ has all entries equal to $1,$ except those in
row and column $i.$ Then $DBD^{-1}$ is permutationally similar to a reciprocal
matrix with all entries equal to $1,$ except those in the first row and column.
\end{proof}

\bigskip

Before we give the description of the efficient vectors for a column perturbed
consistent matrix, we illustrate it with an example.

\begin{example}
Let%
\[
A=\left[
\begin{array}
[c]{ccccc}%
1 & \frac{1}{5} & \frac{1}{4} & 2 & 3\\
5 & 1 & 1 & 1 & 1\\
4 & 1 & 1 & 1 & 1\\
\frac{1}{2} & 1 & 1 & 1 & 1\\
\frac{1}{3} & 1 & 1 & 1 & 1
\end{array}
\right]  .
\]
We have no cycle products equal to $1,$ since $a_{1i}a_{j1}\neq1$ for $i\neq
j.$ So, we have exactly $12$ cycle products $<1.$ They can be written as
\[
\mathcal{C}_{\gamma}:1\rightarrow\gamma_{2}\rightarrow\gamma_{3}%
\rightarrow\gamma_{4}\rightarrow\gamma_{5}\rightarrow1,
\]
with $a_{1\gamma_{2}}a_{\gamma_{5}1}<1.$ Then%
\[
\mathcal{\varepsilon}_{A}(\mathcal{C}_{\gamma})=\left\{  w:w_{1}\geq
a_{1\gamma_{2}}w_{\gamma_{2}}\wedge w_{\gamma_{2}}\geq w_{\gamma_{3}}\geq
w_{\gamma_{4}}\geq w_{\gamma_{5}}\wedge w_{\gamma_{5}}\geq a_{\gamma_{5}%
1}w_{1}\right\}  .
\]
We have that $\gamma$ can be one of the following permutations of
$\{2,3,4,5\}$ (we present them in a convenient order):%
\begin{align*}
\gamma^{(1)}  &  :2453,\quad\gamma^{(2)}:2543,\quad\gamma^{(3)}:2354,\quad
\gamma^{(4)}:2534,\quad\gamma^{(5)}:2345,\quad\gamma^{(6)}:2435,\quad\\
\gamma^{(7)}  &  :3254,\quad\gamma^{(8)}:3524,\quad\gamma^{(9)}:3245,\quad
\gamma^{(10)}:3425,\quad\gamma^{(11)}:4235,\quad\gamma^{(12)}:4325.
\end{align*}
Then,%
\begin{align*}
\mathcal{\varepsilon}_{A}(\mathcal{C}_{\gamma^{(1)}})\cup\mathcal{\varepsilon
}_{A}(\mathcal{C}_{\gamma^{(2)}})  &  =\left\{  w\in\mathbb{R}_{+}^{5}%
:a_{21}w_{1}\geq w_{2}\geq w_{4},w_{5}\geq w_{3}\geq a_{31}w_{1}\right\}  ,\\
\mathcal{\varepsilon}_{A}(\mathcal{C}_{\gamma^{(3)}})\cup\mathcal{\varepsilon
}_{A}(\mathcal{C}_{\gamma^{(4)}})  &  =\left\{  w\in\mathbb{R}_{+}^{5}%
:a_{21}w_{1}\geq w_{2}\geq w_{3},w_{5}\geq w_{4}\geq a_{41}w_{1}\right\}  ,\\
\mathcal{\varepsilon}_{A}(\mathcal{C}_{\gamma^{(5)}})\cup\mathcal{\varepsilon
}_{A}(\mathcal{C}_{\gamma^{(6)}})  &  =\left\{  w\in\mathbb{R}_{+}^{5}%
:a_{21}w_{1}\geq w_{2}\geq w_{3},w_{4}\geq w_{5}\geq a_{51}w_{1}\right\}  ,\\
\mathcal{\varepsilon}_{A}(\mathcal{C}_{\gamma^{(7)}})\cup\mathcal{\varepsilon
}_{A}(\mathcal{C}_{\gamma^{(8)}})  &  =\left\{  w\in\mathbb{R}_{+}^{5}%
:a_{31}w_{1}\geq w_{3}\geq w_{2},w_{5}\geq w_{4}\geq a_{41}w_{1}\right\}  ,\\
\mathcal{\varepsilon}_{A}(\mathcal{C}_{\gamma^{(9)}})\cup\mathcal{\varepsilon
}_{A}(\mathcal{C}_{\gamma^{(10)}})  &  =\left\{  w\in\mathbb{R}_{+}^{5}%
:a_{31}w_{1}\geq w_{3}\geq w_{2},w_{4}\geq w_{5}\geq a_{51}w_{1}\right\}  ,\\
\mathcal{\varepsilon}_{A}(\mathcal{C}_{\gamma^{(11)}})\cup\mathcal{\varepsilon
}_{A}(\mathcal{C}_{\gamma^{(12)}})  &  =\left\{  w\in\mathbb{R}_{+}^{5}%
:a_{41}w_{1}\geq w_{4}\geq w_{2},w_{3}\geq w_{5}\geq a_{51}w_{1}\right\}  .
\end{align*}
By Theorem \ref{tt2},%
\[
\mathcal{E}(A)=%
{\displaystyle\bigcup\limits_{i=1}^{6}}
\left(  \mathcal{\varepsilon}_{A}(\mathcal{C}_{\gamma^{(2i-1)}})\cup
\mathcal{\varepsilon}_{A}(\mathcal{C}_{\gamma^{(2i)}})\right)  .
\]
Note that each set $\mathcal{\varepsilon}_{A}(\mathcal{C}_{\gamma^{(2i-1)}%
})\cup\mathcal{\varepsilon}_{A}(\mathcal{C}_{\gamma^{(2i)}})$ is convex. So,
in this case $\mathcal{E}(A)$ is the union of $6$ convex sets.
\end{example}

For $A\in\mathcal{PC}_{n}$ and $i,j\in\{1,\ldots,n\},$ $i\neq j,$ let
\[
\mathcal{\varepsilon}_{ij}(A)=\left\{  w\in\mathbb{R}_{+}^{n}:a_{i1}w_{1}\geq
w_{i}\geq w_{k}\geq w_{j}\geq a_{j1}w_{1}\text{, }k\neq1,i,j\right\}  .
\]

\bigskip

Since $w$ in $\mathcal{\varepsilon}_{ij}(A)$ is defined by a finite number of
linear inequalities in its (positive) entries, we have the following.

\begin{lemma}
\label{lconv}For $A\in\mathcal{PC}_{n}$ and $i,j\in\{1,\ldots,n\},$ $i\neq j,$
$\mathcal{\varepsilon}_{ij}(A)$ is convex.
\end{lemma}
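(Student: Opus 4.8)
The plan is to observe that $\mathcal{\varepsilon}_{ij}(A)$ is, by its very definition, the solution set of a finite system of (non-strict) linear inequalities together with the positivity constraints $w_k > 0$, and that any such set is convex. Concretely, I would first write out the defining conditions explicitly: a vector $w = [w_1, \ldots, w_n]^T$ lies in $\mathcal{\varepsilon}_{ij}(A)$ if and only if $w_k > 0$ for all $k$, and the chain of inequalities
\[
a_{i1}w_1 \geq w_i, \qquad w_i \geq w_k \ (k \neq 1,i,j), \qquad w_k \geq w_j \ (k \neq 1,i,j), \qquad w_j \geq a_{j1}w_1
\]
holds. Each of these is a linear inequality in the entries of $w$ (the coefficients $a_{i1}$ and $a_{j1}$ being fixed scalars determined by $A$), so the set of $w$ satisfying all of them is an intersection of finitely many closed half-spaces, hence convex; intersecting further with the open positive orthant $\mathbb{R}_+^n$, itself convex, preserves convexity.

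The one routine verification I would carry out is the direct check: given $u, v \in \mathcal{\varepsilon}_{ij}(A)$ and $t \in [0,1]$, set $w = tu + (1-t)v$; then $w_k = tu_k + (1-t)v_k > 0$ for every $k$, and for each defining inequality, say $w_i \geq w_k$, one has $u_i \geq u_k$ and $v_i \geq v_k$, so $w_i = tu_i + (1-t)v_i \geq tu_k + (1-t)v_k = w_k$, and similarly for the inequalities involving the fixed multipliers $a_{i1}, a_{j1}$ (exactly as in the proof of Lemma \ref{ll1}). Hence $w \in \mathcal{\varepsilon}_{ij}(A)$.

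There is essentially no obstacle here; the statement is an immediate structural consequence of the fact already recorded in the sentence preceding it, namely that $w$ in $\mathcal{\varepsilon}_{ij}(A)$ is cut out by finitely many linear inequalities in its positive entries. If anything, the only thing worth a moment's care is making sure the positivity constraints are handled correctly — they give an open set rather than a closed polyhedron — but since $\mathbb{R}_+^n$ is convex this causes no difficulty. I would therefore present the proof as a one-line appeal to this observation, optionally spelling out the convex-combination computation above for completeness, in the same spirit as Lemma \ref{ll1}.
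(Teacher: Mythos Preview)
Your proposal is correct and matches the paper's own justification exactly: the paper simply notes, in the sentence immediately preceding the lemma, that $\mathcal{\varepsilon}_{ij}(A)$ is defined by a finite number of linear inequalities in the positive entries of $w$, and states the lemma as a consequence without further proof. Your optional convex-combination verification (in the spirit of Lemma~\ref{ll1}) is a harmless elaboration of the same idea.
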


\begin{lemma}
\label{leij}Let $i,j\in\{2,\ldots,n\},$ $i\neq j.$ If $A\in\mathcal{PC}_{n}$
is as in (\ref{CPC}) with $a_{1i}a_{j1}<1,$ then $\mathcal{\varepsilon}%
_{ij}(A)\subseteq\mathcal{E}(A).$ In particular, $\mathcal{\varepsilon}%
_{ij}(A)$ is the subset of $\mathcal{E}(A)$ that comes from the cycles in
$\pi_{<1}(A)$ of the form $1\rightarrow i\rightarrow\cdots\rightarrow
j\rightarrow1.$
\end{lemma}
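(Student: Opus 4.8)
The plan is to show that $\mathcal{\varepsilon}_{ij}(A)$ coincides with the union $\bigcup_{\gamma}\mathcal{\varepsilon}_A(\mathcal{C}_\gamma)$ over all Hamiltonian cycles $\mathcal{C}_\gamma\in\pi_{<1}(A)$ of the form $1\rightarrow i\rightarrow\gamma_3\rightarrow\cdots\rightarrow\gamma_{n-1}\rightarrow j\rightarrow1$, where $(\gamma_3,\ldots,\gamma_{n-1})$ ranges over all orderings of $\{2,\ldots,n\}\setminus\{i,j\}$. Once that is done, the containment $\mathcal{\varepsilon}_{ij}(A)\subseteq\mathcal{E}(A)$ follows immediately from Theorem \ref{tt2}, since each $\mathcal{\varepsilon}_A(\mathcal{C}_\gamma)\subseteq\mathcal{E}(A)$.

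First I would write out, for a cycle $\mathcal{C}_\gamma$ of the stated form, exactly what the defining inequalities (\ref{f111}) say. Since $A$ is as in (\ref{CPC}), every entry $a_{\gamma_k\gamma_{k+1}}$ with $\gamma_k,\gamma_{k+1}\in\{2,\ldots,n\}$ equals $1$, so the only non-trivial factors are $a_{1i}$ (on the edge $1\rightarrow i$) and $a_{j1}$ (on the edge $j\rightarrow 1$). Thus $w\in\mathcal{\varepsilon}_A(\mathcal{C}_\gamma)$ if and only if
\[
w_1\geq a_{1i}w_i,\quad w_i\geq w_{\gamma_3}\geq\cdots\geq w_{\gamma_{n-1}}\geq w_j,\quad w_j\geq a_{j1}w_1 .
\]
Using $a_{i1}=1/a_{1i}$ and $a_{1j}=1/a_{j1}$, the first and last read $a_{i1}w_1\geq w_i$ and $w_j\geq a_{j1}w_1$. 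Hence $w\in\mathcal{\varepsilon}_A(\mathcal{C}_\gamma)$ iff $a_{i1}w_1\geq w_i$, $w_j\geq a_{j1}w_1$, and the interior entries $w_{\gamma_3},\ldots,w_{\gamma_{n-1}}$ form a decreasing chain sandwiched between $w_i$ and $w_j$ in the order dictated by $\gamma$. Taking the union over all $\gamma$ just drops the prescribed ordering among the interior entries and keeps the constraint that each of them lies between $w_i$ and $w_j$: that is precisely $a_{i1}w_1\geq w_i\geq w_k\geq w_j\geq a_{j1}w_1$ for all $k\neq 1,i,j$, i.e. membership in $\mathcal{\varepsilon}_{ij}(A)$.

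The two inclusions are then routine. If $w\in\mathcal{\varepsilon}_A(\mathcal{C}_\gamma)$ for some such $\gamma$, the chain of inequalities above shows every interior $w_k$ satisfies $w_i\geq w_k\geq w_j$, so $w\in\mathcal{\varepsilon}_{ij}(A)$. Conversely, if $w\in\mathcal{\varepsilon}_{ij}(A)$, sort the interior entries $w_k$, $k\in\{2,\ldots,n\}\setminus\{i,j\}$, in non-increasing order; reading off the corresponding indices gives an ordering $(\gamma_3,\ldots,\gamma_{n-1})$ for which the interior chain $w_i\geq w_{\gamma_3}\geq\cdots\geq w_{\gamma_{n-1}}\geq w_j$ holds, so $w\in\mathcal{\varepsilon}_A(\mathcal{C}_\gamma)$ for that cycle. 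I should also check $\mathcal{C}_\gamma\in\pi_{<1}(A)$: its cycle product is $a_{1i}a_{j1}<1$ by hypothesis, so indeed $\mathcal{C}_\gamma\in\pi_{<1}(A)$, and in particular $\mathcal{\varepsilon}_A(\mathcal{C}_\gamma)$ is nonempty by Lemma \ref{lempty}. Combining, $\mathcal{\varepsilon}_{ij}(A)=\bigcup_\gamma\mathcal{\varepsilon}_A(\mathcal{C}_\gamma)\subseteq\mathcal{E}(A)$, which proves the first assertion; and since the cycles of the form $1\rightarrow i\rightarrow\cdots\rightarrow j\rightarrow1$ in $\pi_{<1}(A)$ are exactly these $\mathcal{C}_\gamma$, the union of their $\mathcal{\varepsilon}_A(\cdot)$ sets is exactly $\mathcal{\varepsilon}_{ij}(A)$, giving the "in particular" claim.

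The only real point requiring care — the "main obstacle," though it is mild — is the bookkeeping that translates the ordered chain in (\ref{f111}) into the unordered sandwich condition and back: one must be sure that taking the union over all interior orderings $\gamma$ exactly recovers $\mathcal{\varepsilon}_{ij}(A)$ with no extra or missing constraints, and that the sorting argument in the converse direction genuinely produces a valid cycle (e.g. handling ties among the $w_k$ harmlessly, since any tie-breaking order works). Everything else is a direct substitution using the special form (\ref{CPC}) and an appeal to Theorem \ref{tt2}.
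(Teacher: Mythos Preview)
Your proposal is correct and follows essentially the same approach as the paper: both identify $\mathcal{\varepsilon}_{ij}(A)$ with the union of the sets $\mathcal{\varepsilon}_A(\mathcal{C})$ over all Hamiltonian cycles $\mathcal{C}$ of the form $1\rightarrow i\rightarrow\cdots\rightarrow j\rightarrow 1$, compute the cycle product as $a_{1i}a_{j1}<1$, and then invoke Theorem~\ref{tt2}. Your write-up is in fact more explicit than the paper's, which simply asserts the union equality without spelling out the sorting argument for the reverse inclusion.
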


\begin{proof}
Denote by $\mathcal{C}_{ij}$ the set of all Hamiltonian cycles $\mathcal{C}$
of the form $1\rightarrow i\rightarrow\gamma_{3}\rightarrow\cdots
\rightarrow\gamma_{n-1}\rightarrow j\rightarrow1$ (there are $(n-3)!$)$.$
Suppose that $a_{1i}a_{j1}<1,$ so that the product in $A$ for any cycle
$\mathcal{C}$ in $\mathcal{C}_{ij}$ is $<1.$ We have%
\[
\mathcal{\varepsilon}_{A}(\mathcal{C})=\left\{  w\in\mathbb{R}_{+}^{n}%
:a_{i1}w_{1}\geq w_{i}\geq w_{\gamma_{3}}\geq\cdots\geq w_{\gamma_{n-1}}\geq
w_{j}\geq a_{j1}w_{1}\right\}  .
\]
Then
\[
\mathcal{\varepsilon}_{ij}(A)=%
{\displaystyle\bigcup\limits_{\mathcal{C\in C}_{ij}}^{n}}
\mathcal{\varepsilon}_{A}(\mathcal{C}).
\]
Since, by Theorem \ref{tt2}, $\mathcal{\varepsilon}_{A}(\mathcal{C}%
)\subseteq\mathcal{E}(A),$ for each $\mathcal{C\in C}_{ij},$ the claim follows.
\end{proof}

\bigskip

We give next the main result of this section.

\begin{theorem}
\label{tuni}If $A\in\mathcal{PC}_{n}$ is inconsistent of the form
(\ref{CPC})$,$ then
\begin{equation}
\mathcal{E}(A)=%
{\displaystyle\bigcup\limits_{(i,j)\in\mathcal{N}}}
\mathcal{\varepsilon}_{ij}(A), \label{unni}%
\end{equation}
for $\mathcal{N}=\{(i,j):i,j\in\{2,\ldots,n\}$ and $a_{1i}a_{j1}<1\}.$
\end{theorem}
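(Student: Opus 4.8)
The plan is to derive Theorem~\ref{tuni} as a fairly direct consequence of the fundamental cycle theorem (Theorem~\ref{tt2}) together with Lemma~\ref{leij}. By Theorem~\ref{tt2}, since $A$ is inconsistent, $\mathcal{E}(A)=\bigcup_{\mathcal{C}\in\pi_{<1}(A)}\varepsilon_A(\mathcal{C})$. So the task is to organize the Hamiltonian cycles with product $<1$ according to which indices they enter and leave the vertex $1$, and to recognize the resulting unions of the $\varepsilon_A(\mathcal{C})$ as the sets $\varepsilon_{ij}(A)$.

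First I would observe that, because $A$ has the form (\ref{CPC}), every Hamiltonian cycle $\mathcal{C}$ passing through vertex $1$ can be written, after choosing the direction, as $1\rightarrow i\rightarrow\gamma_3\rightarrow\cdots\rightarrow\gamma_{n-1}\rightarrow j\rightarrow1$ for some distinct $i,j\in\{2,\ldots,n\}$, where $\gamma_3,\ldots,\gamma_{n-1}$ is an ordering of the remaining indices. Its cycle product in $A$ is $a_{1i}\cdot 1\cdots 1\cdot a_{j1}=a_{1i}a_{j1}$, since all entries of $A$ not in the first row or column equal $1$. Hence $\mathcal{C}\in\pi_{<1}(A)$ if and only if $a_{1i}a_{j1}<1$, i.e. if and only if $(i,j)\in\mathcal{N}$; and for such a cycle, (\ref{f111}) reduces exactly to $a_{i1}w_1\ge w_i\ge w_{\gamma_3}\ge\cdots\ge w_{\gamma_{n-1}}\ge w_j\ge a_{j1}w_1$, as already recorded in the proof of Lemma~\ref{leij}.

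Next I would partition $\pi_{<1}(A)$ by the pair $(i,j)$: for each $(i,j)\in\mathcal{N}$, the cycles with that entering/leaving pair form the set $\mathcal{C}_{ij}$ of Lemma~\ref{leij}, and that lemma (plus Lemma~\ref{lconv}) gives $\varepsilon_{ij}(A)=\bigcup_{\mathcal{C}\in\mathcal{C}_{ij}}\varepsilon_A(\mathcal{C})$, a convex subset of $\mathcal{E}(A)$. One subtlety to address is that a single cycle, traversed in its two directions, contributes to two pairs $(i,j)$ and $(j',i')$; but this only means some $\varepsilon_A(\mathcal{C})$ appear in two of the $\varepsilon_{ij}(A)$, which is harmless for a union. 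Since every element of $\pi_{<1}(A)$ lies in some $\mathcal{C}_{ij}$ with $(i,j)\in\mathcal{N}$, taking the union over $\mathcal{N}$ of the identities from Lemma~\ref{leij} yields
\[
\bigcup_{(i,j)\in\mathcal{N}}\varepsilon_{ij}(A)=\bigcup_{(i,j)\in\mathcal{N}}\ \bigcup_{\mathcal{C}\in\mathcal{C}_{ij}}\varepsilon_A(\mathcal{C})=\bigcup_{\mathcal{C}\in\pi_{<1}(A)}\varepsilon_A(\mathcal{C})=\mathcal{E}(A),
\]
which is (\ref{unni}).

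I expect no serious obstacle here; the main point requiring care is the bookkeeping in the previous paragraph — verifying that the cycles of the stated form $1\rightarrow i\rightarrow\cdots\rightarrow j\rightarrow1$ with $(i,j)\in\mathcal{N}$ really do exhaust $\pi_{<1}(A)$ (no cycle avoids vertex $1$, since it is Hamiltonian, and the product computation shows the product depends only on the endpoints $i,j$ at vertex $1$), and making sure that the two orientations of each cycle are accounted for without double-counting affecting the union. Finally I would remark that $|\mathcal{N}|\le (n-1)(n-2)$, but since $(i,j)$ and $(j,i)$ cannot both lie in $\mathcal{N}$ (that would force $a_{1i}a_{j1}<1$ and $a_{1j}a_{i1}<1$, whose product is $1$, a contradiction) and the sets for $(i,j)$ and $(j,i)$ are reverses of each other, $\mathcal{E}(A)$ is in fact a union of at most $(n-1)(n-2)/2$ convex sets, matching the count announced in the introduction.
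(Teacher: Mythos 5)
Your proposal is correct and follows essentially the same route as the paper: both directions of (\ref{unni}) are obtained from Theorem \ref{tt2} together with Lemma \ref{leij}, after observing that a Hamiltonian cycle $1\rightarrow i\rightarrow\cdots\rightarrow j\rightarrow1$ has product $a_{1i}a_{j1}$ so that $\pi_{<1}(A)$ is exactly the union of the families $\mathcal{C}_{ij}$ with $(i,j)\in\mathcal{N}$. Your extra bookkeeping (orientations, and the remark that $(i,j)$ and $(j,i)$ cannot both lie in $\mathcal{N}$) is accurate and only makes explicit what the paper leaves implicit.
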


\begin{proof}
Let $\mathcal{C}$ be a Hamiltonian cycle in $A.$ By Lemma \ref{leij},
$\mathcal{\varepsilon}_{ij}(A),$ with $(i,j)\in\mathcal{N}$, is contained in
$\mathcal{E}(A),$ proving the inclusion $\supseteq$ in (\ref{unni})$.$ On the
other hand, by Theorem \ref{tt2}, if $w\in\mathcal{E}(A),$ then $w\in
\varepsilon_{A}(\mathcal{C})$ for some $\mathcal{C}\in\pi_{<1}(A).$ Then, for
$\mathcal{C}:1\rightarrow i\rightarrow\gamma_{3}\rightarrow\cdots
\rightarrow\gamma_{n-1}\rightarrow j\rightarrow1$, the cycle product for
$\mathcal{C}$ in $A$ is $a_{1i}a_{j1}<1.$ By Lemma \ref{leij}, $\varepsilon
_{A}(\mathcal{C})\subseteq\mathcal{\varepsilon}_{ij}(A)$.
\end{proof}

\bigskip

The set $\mathcal{N}$ has at most $\frac{(n-1)(n-2)}{2}$ elements and has
exactly this number if $a_{12},\ldots,a_{1n}$ are pairwise distinct$.$ In
fact, the number of elements in $\mathcal{N}$ is the number of pairs $(i,j)$
with $i,j\in\{2,\ldots,n\},$ $j>i,$ and $a_{1i}\neq a_{1j}.$

\begin{corollary}
If $B\in\mathcal{PC}_{n}$ is a general inconsistent column perturbed
consistent matrix, then $\mathcal{E}(B)$ is the union of (at most)
$\frac{(n-1)(n-2)}{2}$ convex sets$.$
\end{corollary}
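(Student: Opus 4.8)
The plan is to derive this corollary directly from Theorem~\ref{tuni} together with Lemma~\ref{lsim}, so that essentially no new work is needed beyond bookkeeping. First I would invoke the reduction already established: by the lemma preceding Theorem~\ref{tuni} (the one proving monomial similarity to the form (\ref{CPC})), any column perturbed consistent $B\in\mathcal{PC}_n$ is monomially similar, via some monomial $S$, to a matrix $A$ of the form (\ref{CPC}). Since $B$ is inconsistent and consistency is preserved under monomial similarity, $A$ is also inconsistent, so Theorem~\ref{tuni} applies to $A$.

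Next I would apply the displayed consequence of Lemma~\ref{lsim}, namely $\mathcal{E}(SAS^{-1})=S\mathcal{E}(A)$, to conclude $\mathcal{E}(B)=S\mathcal{E}(A)$. Then I substitute the description (\ref{unni}) of $\mathcal{E}(A)$ from Theorem~\ref{tuni}: $\mathcal{E}(A)=\bigcup_{(i,j)\in\mathcal{N}}\varepsilon_{ij}(A)$, so that $\mathcal{E}(B)=\bigcup_{(i,j)\in\mathcal{N}}S\,\varepsilon_{ij}(A)$. Each set $S\,\varepsilon_{ij}(A)$ is the image of a convex set under the invertible linear map $S$, hence convex (alternatively, by Lemma~\ref{lconv}, $\varepsilon_{ij}(A)$ is convex, and applying an invertible linear transformation preserves convexity). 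Thus $\mathcal{E}(B)$ is a union of convex sets indexed by $\mathcal{N}$.

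Finally I would count: as noted in the paragraph following Theorem~\ref{tuni}, $|\mathcal{N}|$ equals the number of unordered pairs $\{i,j\}\subseteq\{2,\ldots,n\}$ with $a_{1i}\neq a_{1j}$ (since for each such pair exactly one of $a_{1i}a_{j1}<1$ or $a_{1j}a_{i1}<1$ holds, and neither holds when $a_{1i}=a_{1j}$), which is at most $\binom{n-1}{2}=\frac{(n-1)(n-2)}{2}$. Hence $\mathcal{E}(B)$ is the union of at most $\frac{(n-1)(n-2)}{2}$ convex sets.

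There is essentially no obstacle here: the corollary is a straightforward repackaging of Theorem~\ref{tuni} for a general (not necessarily normalized) column perturbed consistent matrix. The only point requiring a word of care is verifying that the reduction to form (\ref{CPC}) is via a monomial similarity (so that $\mathcal{E}$ transforms by the invertible linear map $S$, preserving convexity and the cardinality of the union) and that inconsistency is inherited by $A$; both are immediate from earlier results in the excerpt.
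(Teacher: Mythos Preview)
Your proposal is correct and follows essentially the same approach as the paper: reduce $B$ to the form (\ref{CPC}) via a monomial similarity $S$, apply Theorem~\ref{tuni} to get $\mathcal{E}(B)=\bigcup_{(i,j)\in\mathcal{N}}S\,\varepsilon_{ij}(A)$, note each $S\,\varepsilon_{ij}(A)$ is convex by Lemma~\ref{lconv} and linearity, and bound $|\mathcal{N}|\le\binom{n-1}{2}$. Your write-up is in fact slightly more explicit than the paper's (you spell out why inconsistency passes to $A$ and how the bound on $|\mathcal{N}|$ arises), but there is no substantive difference.
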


\begin{proof}
By previous observations, there is an $n$-by-$n$ monomial matrix $S$ such that
$A=S^{-1}BS$ is as in (\ref{CPC}). By Lemma \ref{lsim}, $\mathcal{E}%
(B)=S\mathcal{E}(A).$ Thus, by Theorem \ref{tuni},
\[
\mathcal{E}(B)=%
{\displaystyle\bigcup\limits_{(i,j)\in\mathcal{N}}}
S\mathcal{\varepsilon}_{ij}(A),
\]
with $\mathcal{N}$ as in the theorem. Since, by Lemma \ref{lconv},
$\mathcal{\varepsilon}_{ij}(A)$ is convex then $S\mathcal{\varepsilon}%
_{ij}(A)$ is convex \cite{FJ4}.
\end{proof}

\bigskip

We finally show that the description of the efficient vectors for a simple
perturbed consistent matrix given in \cite{CFF} is an easy consequence of the
results developed in this paper. Suppose that $A\in\mathcal{PC}_{n},$ $n>2,$
is as in (\ref{CPC}), with $a_{12}=\cdots=a_{1,n-1}=1$ and $a_{1n}>1$ (if
$a_{1n}=1,$ $A$ is consistent), which can be assumed for the purpose of
studying the efficient vectors for a simple perturbed consistent matrix, by
Lemma \ref{lsim}. Then, the Hamiltonian cycles whose products from $A$ are
$<1$ are of the form $\mathcal{C}:1\rightarrow\ell\rightarrow\gamma
_{3}\rightarrow\cdots\rightarrow\gamma_{n-1}\rightarrow n\rightarrow1$,
$\ell=2,\ldots,n-1.$ We have%
\[
\mathcal{\varepsilon}_{\ell n}(A)=\left\{  w\in\mathbb{R}_{+}^{n}:w_{1}\geq
w_{\ell}\geq w_{k}\geq w_{n}\geq a_{n1}w_{1}\text{, }k\neq1,\ell,n\right\}  .
\]
Then,%
\begin{equation}%
{\displaystyle\bigcup\limits_{\ell=2}^{n-1}}
\mathcal{\varepsilon}_{\ell n}(A)=\left\{  w\in\mathbb{R}_{+}^{n}:w_{1}\geq
w_{k}\geq w_{n}\geq a_{n1}w_{1}\text{, }k\neq1,n\right\}  . \label{set}%
\end{equation}
By Theorem \ref{tuni}, $\mathcal{E}(A)$ is the set (\ref{set})$,$ as claimed
in \cite{CFF}.

\bigskip

Though the set of efficient vectors for a simple perturbed consistent matrix
is convex, when the reciprocal matrix is double perturbed, that is, is
obtained from a consistent matrix by changing two pairs of reciprocal entries,
non-convexity may occur.

\begin{example}
Let%
\[
A=\left[
\begin{array}
[c]{cccc}%
1 & \frac{1}{3} & \frac{1}{2} & 1\\
3 & 1 & 1 & 1\\
2 & 1 & 1 & 1\\
1 & 1 & 1 & 1
\end{array}
\right]  .
\]
We have
\[
\mathcal{E}(A)=\mathcal{\varepsilon}_{23}(A)\cup\mathcal{\varepsilon}%
_{24}(A)\cup\mathcal{\varepsilon}_{34}(A),
\]
with%
\begin{align*}
\mathcal{\varepsilon}_{23}(A)  &  =\left\{  w\in\mathbb{R}_{+}^{4}:3w_{1}\geq
w_{2}\geq w_{4}\geq w_{3}\geq2w_{1}\right\}  ,\\
\mathcal{\varepsilon}_{24}(A)  &  =\left\{  w\in\mathbb{R}_{+}^{4}:3w_{1}\geq
w_{2}\geq w_{3}\geq w_{4}\geq w_{1}\right\}  ,\\
\mathcal{\varepsilon}_{34}(A)  &  =\left\{  w\in\mathbb{R}_{+}^{4}:2w_{1}\geq
w_{3}\geq w_{2}\geq w_{4}\geq w_{1}\right\}  .
\end{align*}
We have%
\begin{align*}
u  &  =\left[
\begin{array}
[c]{cccc}%
1 & 3 & 3 & 3
\end{array}
\right]  ^{T}\in\mathcal{\varepsilon}_{23}(A),\\
v  &  =\left[
\begin{array}
[c]{cccc}%
1 & 1 & 2 & 1
\end{array}
\right]  ^{T}\in\mathcal{\varepsilon}_{34}(A).
\end{align*}
However,%
\[
u+v=\left[
\begin{array}
[c]{cccc}%
2 & 4 & 5 & 4
\end{array}
\right]  ^{T}\notin\mathcal{E}(A).
\]

\end{example}

\section{Conclusions\label{sconclusions}}

Examples of reciprocal matrices for which the set of efficient vectors is not
convex are known, for example those for which the right Perron vector is not
efficient. Here we have described the set of efficient vectors for a
reciprocal matrix $A$ as a union of at most $\frac{(n-1)!}{2}$ convex sets.
Each of these sets corresponds to a Hamiltonian cycle in $A$ whose product of
the entries is less than $1$ (when the matrix is inconsistent). Our
characterization allowed us to describe the set of efficient vectors for a
reciprocal matrix obtained from a consistent matrix by modifying one column
(row) as a union of at most $\frac{(n-1)(n-2)}{2}$ convex sets. In particular,
the known characterization of the efficient vectors for a reciprocal matrix
obtained from a consistent one by modifying one pair of reciprocal entries
followed in an easy way. In this case the set of efficient vectors is convex.
We also identified efficient vectors for $A$ with at most one order reversal
at positions along the associated cycle.

The results obtained here may be helpful in a better understanding of the
convexity of the set of efficient vectors for a reciprocal matrix, as well as
of the existence of rank reversals in these vectors.

\begin{acknowledgement}
We thank the referees for the helpful comments.
\end{acknowledgement}

\end{document}